\newtheorem{theorem}{Theorem}
\newtheorem{lem}[theorem]{Lemma}
\newtheorem{claim}[theorem]{Claim}
\theoremstyle{definition}
\newtheorem{defn}[theorem]{Definition}
\newtheorem{cor}[theorem]{Corollary}
\newtheorem{subclaim}[theorem]{Subclaim}
\theoremstyle{remark}
\newtheorem*{remark}{Remark}
\newcommand{\paren}[1]{\left( #1 \right)}
\newcommand{\brac}[1]{\left[ #1 \right]}
\newcommand{\curly}[1]{\left\{ #1 \right\}}
\newcommand{\forces}{\Vdash}
\DeclareMathOperator{\dom}{dom}
\DeclareMathOperator{\Lim}{Lim}
\DeclareMathOperator{\otp}{otp}
\DeclareMathOperator{\cf}{cf}
\DeclareMathOperator{\supp}{supp}
\DeclareMathOperator{\Col}{Col}
\title{The Special Aronszajn Tree Property 
	at $\kappa^+$ and $\square_{\kappa, 2}$}
	\author{John Susice\footnote{email: jpsusice@ucla.edu} 
	\footnote{This material is based upon work 
	supported by the National Science Foundation under Grants No. DMS-1363364 and DMS-1800613}}
\affil{Department of Mathematics, \\ University of California at Los Angeles}
\begin{document}

\maketitle{}

\begin{abstract}
	We show that for any regular cardinal $\kappa$, 
	$\square_{\kappa, 2}$ is consistent with ``all 
	$\kappa^+$-Aronszajn trees are special.'' 
	By a result of Shelah and Stanley \cite{shelah_stanley}
	this is optimal in the sense that 
	$\square_{\kappa, 2}$ may not be strengthened to 
	$\square_{\kappa}$. Using methods of Golshani and 
	Hayut \cite{golshani_hayut} we obtain our consistency 
	result simultaneously for all regular $\kappa$. 
\end{abstract}

\section*{Acknowledgments}
The author would like to thank Itay Neeman for his 
assistance in revising this paper. 

\section{Introduction}

Given some infinite cardinal $\kappa$, a \emph{$\kappa$-Aronszajn 
tree} is a tree of height $\kappa$ without cofinal branches all of 
whose levels have cardinality $< \kappa$. By classical results of 
K\"onig and Aronszajn, respectively, there are no $\aleph_0$-Aronszajn 
trees but there are $\aleph_1$-Aronszajn trees. 

Of particular interest to us are special Aronszajn trees. For any 
successor cardinal $\kappa^+$, we say that a $\kappa^+$-Aronszajn tree 
$T$ is \emph{special} if there exists a function 
$f \colon T \rightarrow \kappa$ 
such that if $x <_T y$ then $f(x) \neq f(y)$. 
Following \cite{golshani_hayut}, if there are Aronszajn trees of height $\kappa^+$ and 
all such trees are special, we say that the \emph{Special Aronszajn Tree Property} holds 
at $\kappa^+$, and denote this by $\mathsf{SATP}(\kappa^+)$. 

By a result of Baumgartner, Malitz, and Reinhardt \cite{baumgartner_malitz_reinhardt} the forcing 
axiom 
$\mathsf{MA}_{\aleph_1}$ implies $\mathsf{SATP}(\aleph_1)$. Laver and 
Shelah \cite{laver_shelah} showed that $\mathsf{SATP}(\aleph_2)$ is consistent assuming the 
existence of a weakly compact cardinal. The forcing which achieves this result is a Levy Collapse 
of $\kappa$ to $\aleph_2$ followed by an iteration of length $\geq \kappa^+$ of posets 
which successively specialize all new $\kappa$-Aronszajn trees arising in the extension. 

Golshani and Hayut \cite{golshani_hayut} showed that under the same assumption it is consistent that 
$\mathsf{SATP}(\aleph_1)$ and $\mathsf{SATP}(\aleph_2)$ hold simultaneously, and achieved a global 
result by showing that it is consistent that $\mathsf{SATP}(\kappa^+)$ holds simultaneously for all 
regular $\kappa$, assuming the existence of a proper class of supercompact cardinals.
This result is achieved by adapting the methods of \cite{laver_shelah} to specialize all possible 
names for trees of height $\kappa^+$ while anticipating the specialization of trees of height $\kappa$. 

Another class of combinatorial principles of interest to set theorists are \emph{square principles}. 
The original square principle $\square_{\kappa}$ was introduced by Jensen \cite{jensen}, who also 
introduced a weak variant $\square^{*}_{\kappa}$. Later Schimmerling \cite{combinatorial_principles_in_core_model}
investigated principles $\square_{\kappa, \lambda}$
of intermediate strength. Suppose $\kappa$ is an infinite cardinal and 
$\lambda$ is a nonzero (but potentially finite) cardinal. A $\square_{\kappa, \lambda}$ sequence is 
a sequence $\vec{\mathcal{C}} = \langle \mathcal{C}_{\alpha} \colon \alpha \in  \Lim{(\kappa^+)} \rangle$
such that: 
\begin{enumerate}[(1)]
	\item For all $\alpha \in \Lim{(\kappa^+)}$, $1 \leq | \mathcal{C}_{\alpha} | \leq \lambda$. 
	\item For all $\alpha \in \Lim{(\kappa^+)}$ and $C \in \mathcal{C}_{\alpha}$, 
		$C$ is club in $\alpha$ and $\otp{C} \leq \kappa$. 
	\item For all $\alpha \in \Lim{(\kappa^+)}$, every $C \in \mathcal{C}_{\alpha}$ \emph{threads}
		$\langle \mathcal{C}_{\beta} \colon \beta < \alpha \rangle$ in the sense that 
		$C \cap \beta \in \mathcal{C}_{\beta}$ for all $\beta$ which are limit points of $C$. 
\end{enumerate}
We say $\square_{\kappa, \lambda}$ holds if such a sequence exists. Jensen's original principle 
$\square_{\kappa}$ is $\square_{\kappa, 1}$, and the weak square principle $\square^*_{\kappa}$ 
is $\square_{\kappa, \kappa}$. In this paper we will be concerned exclusively with 
$\square_{\kappa, 2}$.

We show that the result of Laver and Shelah may be improved by establishing the consistency 
of $\mathsf{SATP}(\aleph_2)$ with $\square_{\omega_1, 2}$. By a result of Shelah and Stanley 
\cite{shelah_stanley} $\mathsf{SATP}(\aleph_2)$ is incompatible with $\square_{\omega_1}$, so this 
result is optimal. Our result is obtained by using an iteration similar to that of Laver-Shelah, 
with the exception that we use a poset of Cummings and Schimmerling \cite{indexed_squares}--which collapses 
weakly compact $\kappa$ to $\aleph_2$ while adding a $\square_{\omega_1, 2}$-sequence simultaneously--in 
place of the Levy Collapse.

Furthermore, we show that our methods are compatible with the anticipatory framework of 
Golshani and Hayut, and thus we are also able to obtain the analogous global result--namely the 
consistency of $\mathsf{SATP}(\kappa^+)$ plus $\square_{\kappa, 2}$ for all regular $\kappa$.

\section{The Cummings-Schimmerling Poset}

We describe here a poset introduced by Cummings and Schimmerling 
\cite{indexed_squares}
which, given cardinals $\mu < \kappa$ with $\mu$ regular and 
$\kappa$ inaccessible, will simultaneously collapse 
$\kappa$ to become $\mu^+$ and add a $\square_{\mu, 2}$ sequence. 

We will denote this poset by $\mathbb{P}(\mu, < \kappa)$. 
A conditiion $p$ in $\mathbb{P}(\mu, < \kappa)$ is a function 
such that: 

\begin{enumerate}
	\item The domain of $p$ is a closed set of ordinals below $\kappa$
		of cardinality $< \mu$. 
	\item If $\alpha \in \dom{p}$ is a successor ordinal, say $\alpha = \bar{\alpha} + 1$, 
		then the unique element of $p(\alpha)$ is $\left\{ \bar{\alpha} \right\}$. 
\item If $\alpha \in \dom{p}$ is a limit ordinal with cofinality $< \mu$ then 
	$1 \leq | p(\alpha) | \leq 2$ and each element of 
	$p(\alpha)$ is a club subset of $\alpha$ with order type 
	$< \mu$. 
\item If $\alpha \in \dom{p}$ is a limit ordinal with cofinality $\geq \mu$ then 
	$p(\alpha) = \curly{C}$, where $C$ is some closed subset 
	of $\alpha$ with order type $< \mu$ such that 
	$\max{C} = \sup{(\dom{p} \cap \alpha)}$. 
\item (Coherence) If $\alpha \in \dom{p}$, $C \in p(\alpha)$, 
	and $\beta \in C$, then $\beta \in \dom{p}$. If moreover 
	$\beta \in \Lim{C}$, then 
	$C \cap \beta \in p(\beta)$. 
\end{enumerate}

The ordering of the poset is defined by $q \leq p$ iff 
$\dom{q} \supseteq \dom{p}$ and:

\begin{enumerate}[(a)]
	\item $q(\alpha) = p(\alpha)$ for all $\alpha \in \dom{p}$ of 
		cofinality $< \mu$.
	\item If $\alpha$ is of cofinality $\geq \mu$, 
		$p(\alpha) = \curly{C}$, and 
		$q(\alpha) = \curly{D}$, then 
		$C = D \cap (\max{C} + 1)$. 
\end{enumerate}

The Cummings-Schimmerling poset will not be $< \mu$-closed but will still be 
sufficiently closed so as to not add bounded subset of $\mu$: 

\begin{defn}
	Suppose that $\nu$ is some ordinal. A poset $\mathbb{P}$ is said to be 
	\emph{$\nu$-strategically closed} if Player II has a winning strategy in the 
	following game $G(\mathbb{P}, \nu)$ of length $\nu$: 
	\begin{center}
	\begin{tabular}{c | c c c c c c c c c }
		I &  $p_1$ & & $p_3$ & $\cdots$ & & $p_{\omega + 1}$ & & $\cdots$ \\ \hline
		II & & $p_2$ & & $\cdots$ & $p_{\omega}$ & & $p_{\omega + 2}$ & $\cdots$
	\end{tabular}
\end{center}
In this game the two players alternate building a descending chain $\langle p_{\xi} \colon 1 \leq \xi < \nu \rangle$ 
with Player II playing at all even ordinals (including limits) and Player II loses if he is unable to make a legal move.  

Suppose that $\mu$ is some cardinal. We say that $\mathbb{P}$ is \emph{$< \mu$-strategically closed} if 
it is $\nu$-strategically closed for all $\nu < \mu$. 

\end{defn}

\begin{lem}
	Suppose that $\mu < \kappa$ are cardinals with 
	$\mu$ regular and $\kappa$ inaccessible. 
	Then $\mathbb{P}(\mu, < \kappa)$ is $< \mu$-strategically closed and 
	$\kappa$-Knaster. 
	 Moreover, in the generic 
	extension by $\mathbb{P}(\mu, < \kappa)$, 
	$\kappa = \mu^+$ and $\square_{\mu, 2}$ holds. 
	\label{basic_properties_of_p_lem}
\end{lem}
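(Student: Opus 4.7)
The proof splits into four parts: strategic closure, the Knaster property, cardinal collapse, and verification of $\square_{\mu,2}$.

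For $<\mu$-strategic closure I would exhibit a winning strategy for Player II. At each of her moves $p_{2\eta}$ she maintains the invariant that $\gamma_{\eta} := \max(\dom p_{2\eta})$ is a limit ordinal and that one of the clubs in $p_{2\eta}(\gamma_{\eta})$ enumerates (the closure of) the previous tops $\langle \gamma_{\eta'} : \eta' < \eta\rangle$. At successor even stages she extends whatever Player I just played by adjoining one new such $\gamma_{\eta}$ above; at limit even stages $\eta < \mu$ she plays $\bigcup_{\eta'<\eta} p_{2\eta'}$ together with a new top $\sup_{\eta'<\eta}\gamma_{\eta'}$ whose attached club is $\{\gamma_{\eta'} : \eta' < \eta\}$. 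The main obstacle is checking coherence at limit stages: every limit point of the new top club must already be in the domain with the correct initial segment. This works precisely because the $\gamma_{\eta'}$ were themselves chosen with clubs consisting of prior tops, so the coherence clause (5) propagates upward.

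For $\kappa$-Knaster I would run a $\Delta$-system argument using inaccessibility of $\kappa$. Given $\langle p_i : i < \kappa \rangle$ with $|\dom p_i| < \mu < \kappa$, first refine to a $\kappa$-sized subfamily where $\langle \dom p_i : i < \kappa\rangle$ forms a $\Delta$-system with root $r$ and all $p_i\restriction r$ are equal; a further pigeonhole refinement makes the order type of $\dom p_i$ and the relative structure of $p_i$ above $r$ uniform. Any two conditions $p_i, p_j$ in the refined family are then compatible via $p_i \cup p_j$: the domain is closed because the fresh parts sit above $\sup r$ and a union of two closed sets of ordinals is closed, and the coherence and cofinality clauses are local and are inherited from the individual conditions.

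For the cardinal arithmetic, a straightforward density argument shows that for any $\alpha$ with $\mu \leq \alpha < \kappa$, the set of conditions with $\alpha \in \dom p$ is dense: given $p$, one can extend $\dom p$ to include $\alpha$ by choosing a short closed cofinal subset of $\alpha$ (of order type $< \mu$, which exists by inaccessibility) and filling in the coherence requirements below, which only adds $<\mu$ new domain points. When $\mathrm{cf}^V(\alpha) \geq \mu$, this forces the generic to add a cofinal subset of $\alpha$ of size $<\mu$, witnessing $|\alpha|^{V[G]} = \mu$. Combined with the preservation of $\mu$ (from $<\mu$-strategic closure, which adds no bounded subsets of $\mu$) and of $\kappa$ (from the $\kappa$-cc implied by Knaster), we conclude $\kappa = \mu^+$ in $V[G]$.

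For $\square_{\mu,2}$ I would read the sequence off the generic: for $\alpha$ with $\mathrm{cf}^V(\alpha) < \mu$ set $\mathcal{C}_{\alpha} = p(\alpha)$ for any $p \in G$ with $\alpha \in \dom p$, which is well-defined by clause (a) of the ordering; for $\alpha$ with $\mathrm{cf}^V(\alpha) \geq \mu$ set $\mathcal{C}_{\alpha} = \{D_{\alpha}\}$ where $D_{\alpha} = \bigcup\{C : \exists p \in G,\ p(\alpha) = \{C\}\}$, which by clause (b) of the ordering is a coherent union of initial segments and is cofinal in $\alpha$ by density. The cardinality bound $|\mathcal{C}_{\alpha}| \leq 2$ and the order-type bound $\otp(C) \leq \mu$ are immediate from clauses (3)--(4) of the definition and the cofinality analysis above, while the threading requirement is exactly clause (5). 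I expect the strategic closure argument to be the main technical obstacle; the remaining pieces are essentially bookkeeping once Player II's strategy is in place.
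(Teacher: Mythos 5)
The paper itself only proves $<\mu$-strategic closure in detail and then defers the remaining three claims (the Knaster property, the collapse, and the verification of $\square_{\mu,2}$) to \cite{indexed_squares}, so I will focus on your strategic-closure sketch. Your approach is in the same spirit as the paper's: Player II plays unions at limit stages and takes care to maintain a ``ladder of previous tops'' as the thread at the new top. However, you misidentify the main technical obstacle and thereby leave a genuine gap. You say the main thing to check is that the new top club $\{\gamma_{\eta'} : \eta' < \eta\}$ is coherent, and this is the part that really does ``propagate upward.'' But the hard part of the limit stage is that $\bigcup_{\eta'<\eta} p_{2\eta'}$ is simply \emph{not a condition}, for two reasons your sketch does not address. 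First, the union $D$ of the domains need not be closed: Player I can, across cofinally many odd stages, insert points below the current top that accumulate at some $\alpha < \sup D$ which lies in no $\dom p_{2\eta'}$, so Player II must add $\alpha$ to the domain and assign it a value. The paper's key observation here is that such an $\alpha$ must equal $\sup(D \cap \beta)$ for the least $\beta \in D$ above $\alpha$, that this $\beta$ necessarily has cofinality $\geq \mu$, and hence the thread assigned at $\beta$ (after unioning across stages) already has $\alpha$ as its supremum, giving a canonical assignment at $\alpha$ that respects coherence. Second, at ordinals $\beta \in D$ of cofinality $\geq \mu$ the values $p_{2\eta'}(\beta)$ are singletons $\{C_{\eta'}\}$ with the $C_{\eta'}$ end-extending one another, and the correct value for Player II at $\beta$ is the union of these threads together with its supremum, not the literal union of the functions (which would produce a set of $|\eta|$ many threads, violating clause (4)). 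Your phrase ``together with a new top'' deals only with the overall $\sup D$ and not with these interior adjustments. Without these two steps the strategy does not produce a legal condition and the argument does not close.

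A smaller point: your invariant asserts that one of the clubs in $p_{2\eta}(\gamma_\eta)$ enumerates the closure of the previous tops, but at a successor even stage the previous tops are bounded strictly below $\gamma_\eta$, so to make the attached set a genuine club in $\gamma_\eta$ one must pad it with a cofinal tail inside $(\gamma_{\eta-1}, \gamma_\eta)$ and correspondingly enlarge the domain; this is worth saying, though it is not hard. The paper sidesteps the issue entirely by not maintaining any invariant at successor even stages -- Player II just strictly increases the maximum of the domain -- and letting the case analysis at limit stages do all the work. Your treatments of the Knaster property ($\Delta$-system plus pigeonhole), the collapse (density of $\{p : \alpha \in \dom p\}$ together with $\kappa$-c.c.\ and strategic closure), and the extraction of the $\square_{\mu,2}$-sequence from the generic are all standard and consistent with what Cummings and Schimmerling prove; the paper does not reproduce these arguments but they are as you describe.
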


\begin{proof}
	We prove strategic closure. Fix $\nu < \mu$. We define 
	a winning strategy for Player II in the game $G(\mathbb{P}(\mu, < \kappa), \nu)$. 
	Suppose that $\xi < \nu$ is even and $\langle p_{\zeta} \colon 1 \leq \zeta < \xi \rangle$
	have already been played. 
	
	If $\xi$ is a successor ordinal then Player II plays 
	an arbitrary extension $p_{\xi}$ of $p_{\xi - 1}$ such that 
	$\sup{(\dom{(p_{\xi})})}$ is strictly greater than 
	$\sup{(\dom{(p_{\xi - 1})})}$.  
	Now suppose that $\xi$ is a nonzero limit ordinal and let 
	\begin{align*}
		D = \bigcup_{1 \leq \zeta < \xi}{\dom{(p_{\xi})}}
	\end{align*}
	We define a condition $p_{\xi}$ with domain $\bar{D} = D \cup \Lim{(D)}$
	which extends all $\langle p_{\zeta} \colon 1 \leq \zeta < \xi \rangle$. 
	First, if $\alpha \in D$ with $\cf{(\alpha)} < \mu$, we let 
	$p_{\xi}(\alpha) = p_{\zeta}(\alpha)$ for any $1 \leq \zeta < \xi$ such that 
	$\alpha \in \dom{(p_\zeta)}$. 
	If $\alpha \in D$ with $\cf{(\alpha)} \geq \mu$ then for each 
	$1 \leq \zeta < \xi$ let $C_{\zeta}$ be the unique element of 
	$p_{\alpha}(\zeta)$ and let $p_{\xi}(\alpha) = \curly{C}$, where 
	\begin{align*}
		C = \bigcup_{\zeta < \xi}{C_{\zeta}} \cup \bigg\{ \sup{ \bigcup_{\zeta < \xi}{C_{\zeta}}} \bigg\}
	\end{align*}
	If $\alpha \in \Lim{D} \setminus D$ is below $\sup{D}$ then 
	choose $\beta \in D$ least such that $\alpha < \beta$. Note that $\beta$ must 
	be a limit ordinal by conditions (2), (5) in the definition of $\mathbb{P}(\mu, < \kappa)$. We claim 
	$\cf{(\beta)} \geq \mu$. 

	Suppose otherwise and let $E$ be an element of $p_{\xi}(\beta)$ (note that 
	$p_{\xi}(\beta)$ has already been defined). If $\alpha$ is not a limit point of 
	$E$, then let $\gamma$ be the least element of $E$ above $\alpha$. Then 
	$\alpha < \gamma < \beta$, and by condition (5) in the definition of $\mathbb{P}(\mu, < \kappa)$
	we have $\gamma \in D$, contradicting choice of $\beta$. 

	Thus $\cf{(\beta)} \geq \mu$ as desired, and so if we let 
	$E$ be the unique element of $p_{\xi}(\beta)$ (again, this has already been defined)
	then clause (4) in the definition of $\mathbb{P}(\mu, < \kappa)$ guarantees 
	$\max{(E)} = \alpha$, and we may define $p_{\xi}(\alpha) = \curly{E}$. 

	Finally, if $\alpha = \sup{D}$, let 
	\begin{align*}
		p_{\xi}(\alpha) = 
		\left\{ \max{(\dom{(p_{\zeta})})} \colon 1 \leq \zeta < \xi \right\}
	\end{align*}
	It should be clear that $p_{\xi}$ as defined above is a condition in 
	$\mathbb{P}(\mu, < \kappa)$ and the strategy described is a winning 
	strategy for Player II in $G(\mathbb{P}(\mu, < \kappa), \nu)$. 

	The rest of the lemma may be proved exactly as in \cite{indexed_squares}. 
\end{proof}

Note that an argument similar to the one above will show that if $\mu = \aleph_1$ then 
$\mathbb{P}(\mu, <\kappa)$ is in fact countably closed.

In the proof of our consistency results it will be crucial 
that for $\mu < \kappa_0 < \kappa_1$ with $\mu$ regular and 
$\kappa_0$, $\kappa_1$ inacessible, $\mathbb{P}(\mu, < \kappa_0)$
may be viewed as a factor of $\mathbb{P}(\mu, < \kappa_1)$. 
In order to precisely state the necessary factorization result, 
we first define two auxilliary posets: 

Suppose that $\vec{\mathcal{C}} = \langle \mathcal{C}_\alpha \colon \alpha < \mu^+ \rangle$ 
is a $\square_{\mu, 2}$-sequence. We let $\mathbb{T} = \mathbb{T}_{\vec{\mathcal{C}}}$ be the poset of 
closed bounded $C \subseteq \mu^+$ of order-type $< \mu$ such that 
$C$ threads $\langle \mathcal{C}_{\alpha} \colon \alpha \leq \max{C} \rangle$ in the sense 
that $C \cap \alpha \in \mathcal{C}_{\alpha}$ for all $\alpha \in \Lim{C}$. 
For $C, D \in \mathbb{T}$, we set $D \leq C$ if and only if $D$ is an end-extension of 
$C$. 

Finally, if $G$ is the generic added by $\mathbb{P}(\mu, < \kappa_0)$, 
then $\mathbb{Q} = \mathbb{Q}_{\mu, \kappa_0, \kappa_1, G}$ is the poset 
defined in $V\brac{G}$ by setting $q \in \mathbb{Q}$ iff: 

\begin{enumerate}[(1)]
	\item The domain of $q$ is a set of limit ordinals in the interval 
		$(\kappa_0, \kappa_1)$ of size $< \mu$. 
	\item If $\alpha \in \dom{q}$ has cofinality $< \mu$ then 
	$1 \leq | q(\alpha) | \leq 2$ and each element of 
	$q(\alpha)$ is a club subset of $\alpha$ with order type 
	$< \mu$. 
\item If $\alpha \in \dom{q}$ has cofinality $\geq \mu$ then 
	$q(\alpha) = \curly{C}$, where $C$ is a club subset of 
	$\alpha$ with order type $< \mu$ such that 
	$\max{C} \geq \sup{( \dom{q} \cap \alpha)}$.
\item (Coherence) If $\alpha \in \dom{q}$, $C \in q(\alpha)$, and 
	$\beta \in \Lim{C}$, then: 
	\begin{enumerate}[(A)]
		\item If $\beta > \kappa_0$, then $\beta \in \dom{q}$ and $C \cap \beta \in q(\beta)$. 
		\item If $\beta < \kappa_0$, then $C \cap \beta \in \mathcal{C}_\beta$, where 
			$\langle \mathcal{C}_\beta \colon \beta < \kappa_0 \rangle$ is $\bigcup{G}$. 
	\end{enumerate}
\end{enumerate}
For two elements $p, q \in \mathbb{Q}_{\mu, \kappa_0, \kappa_1, G}$, we set $p \leq q$ 
iff: 
\begin{enumerate}[(1)]
	\item $\dom{q} \subseteq \dom{p}$. 
	\item For all $\alpha \in \dom{q}$: 
		\begin{enumerate}[(a)]
			\item If $\alpha$ has cofinality $< \mu$ then $p(\alpha) = q(\alpha)$. 
			\item If $\alpha$ has cofinality $\geq \mu$, $p(\alpha) = \curly{C}$, and 
				$q(\alpha) = \curly{D}$, then $C$ is an end-extension of 
				$D$. 
		\end{enumerate}
\end{enumerate}

\begin{lem}
	Suppose that $\mu < \kappa_0 < \kappa_1$ are cardinals with $\mu$ regular and $\kappa_0$, 
	$\kappa_1$ inaccessible, and $\dot{G}$ is the canonical name for the 
	$\mathbb{P}(\mu, < \kappa_0)$-generic. Then if we let $\dot{\mathbb{T}} = \check{\mathbb{T}}_{
		\bigcup{\dot{G}}}$, $\dot{\mathbb{Q}} = \check{\mathbb{Q}}_{\mu, \kappa_0, \kappa_1, \dot{G}}$, there is 
		an isomorphism between a dense subset of $\mathbb{P}(\mu, < \kappa_1)$ and a dense 
		subset of $\mathbb{P}(\mu, < \kappa_0) \ast \dot{\mathbb{T}} \ast \dot{\mathbb{Q}}$. 
		In particular these two forcings are equivalent, and so informally we may view them as 
		being equal. 
\end{lem}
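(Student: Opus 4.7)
The plan is to exhibit dense subsets $D \subseteq \mathbb{P}(\mu, <\kappa_1)$ and $D' \subseteq \mathbb{P}(\mu, <\kappa_0) \ast \dot{\mathbb{T}} \ast \dot{\mathbb{Q}}$ together with an order-isomorphism $\Phi \colon D \to D'$ realizing the natural decomposition of a condition in $\mathbb{P}(\mu, <\kappa_1)$ into a piece below $\kappa_0$, a thread at $\kappa_0$, and a piece above. I take $D$ to be those $p \in \mathbb{P}(\mu, <\kappa_1)$ with $\kappa_0 \in \dom{p}$, and set $\Phi(p) = (p_0, \check{t}, \check{q})$, where $p_0 = p \restriction \kappa_0$, $t$ is the unique element of $p(\kappa_0)$ (well-defined since $\cf{(\kappa_0)} = \kappa_0 > \mu$), and $q$ is the restriction of $p$ to the limit ordinals in $\dom{p} \cap (\kappa_0, \kappa_1)$. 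The set $D'$ consists of those triples $(p_0, \check{t}, \check{q})$ with $t, q \in V$, $\max{t} = \sup{(\dom{p_0})}$, $\dom{q}$ closed in $(\kappa_0, \kappa_1)$, thread maxes at high-cofinality points of $\dom{q}$ equal to the corresponding suprema, $p_0 \forces \check{t} \in \dot{\mathbb{T}}$, and $(p_0, \check{t}) \forces \check{q} \in \dot{\mathbb{Q}}$.

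For density of $D$ in $\mathbb{P}(\mu, <\kappa_1)$: given $p$ with $\kappa_0 \notin \dom{p}$, closedness of $\dom{p}$ in $\kappa_1$ forces $\dom{p} \cap \kappa_0$ to be bounded in $\kappa_0$, with supremum some $\gamma$ (after a preliminary extension to make $\dom{p} \cap \kappa_0$ nonempty if necessary). One then adjoins $\kappa_0$ to the domain with $p'(\kappa_0) = \curly{\curly{\gamma}}$; coherence is preserved and the remaining axioms are trivially verified. Density of $D'$ in the three-step iteration uses Lemma~\ref{basic_properties_of_p_lem}: since $\mathbb{P}(\mu, <\kappa_0)$ is $<\mu$-strategically closed and $\dot{t}$, $\dot{q}$ are forced to be objects of size $<\mu$, one strengthens $p_0$ (and $\dot{t}$) to decide $\dot{t}$ and $\dot{q}$ as ground-model objects, then further extends to meet the tightening conditions --- principally, closing up $\dom{q}$ under limits and adjusting thread maxes to equal the corresponding suprema.

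The verification that $\Phi$ is an order-isomorphism onto $D'$ is largely routine: order- and incompatibility-preservation in both directions follow immediately from comparing the orderings of $\mathbb{P}(\mu, <\kappa_1)$ with those of $\mathbb{P}(\mu, <\kappa_0)$, $\mathbb{T}$, and $\mathbb{Q}$ --- in particular the end-extension order on $\mathbb{T}$ matches the refinement of $p(\kappa_0)$ under passing to an extension, by clause (b) of the ordering of $\mathbb{P}$. The inverse map reassembles $(p_0, t, q)$ into $p$ with $\dom{p} = \dom{p_0} \cup \curly{\kappa_0} \cup \dom{q} \cup S$, where $S$ is the set of successor ordinals $\bar{\alpha} + 1$ lying in some thread of $q$, setting $p(\kappa_0) = \curly{t}$, $p(\bar{\alpha} + 1) = \curly{\curly{\bar{\alpha}}}$ on $S$ as required by condition~(2), and letting $p$ agree with $p_0$ and $q$ elsewhere. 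The main obstacle is bookkeeping around the axiomatic discrepancies between $\mathbb{P}$ and $\mathbb{Q}$: $\mathbb{P}$-conditions must have closed domain in $\kappa_1$ and thread maxes at cofinality $\geq \mu$ points equal to (not merely bounding) the supremum of the domain below, while $\mathbb{Q}$ relaxes both demands. These discrepancies are exactly what the tightening conditions defining $D'$ are designed to absorb; density of the resulting $D'$ reduces to the same strategic-closure and coherence argument as for $D$.
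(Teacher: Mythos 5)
Your decomposition $p \mapsto (p\restriction\kappa_0,\, t,\, q)$ with $t$ the unique element of $p(\kappa_0)$ and $q$ the part of $p$ above $\kappa_0$ is indeed the natural factorization, and it is the one the paper is gesturing at when it cites \cite{indexed_squares} (the paper itself gives no argument). But two places where you say the verification is routine in fact contain genuine gaps.

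Density of $D$: adjoining $\kappa_0$ to $\dom p$ can break clause~(4) of the definition of $\mathbb{P}(\mu,<\kappa_1)$ at ordinals \emph{above} $\kappa_0$. If $\alpha$ is the least element of $\dom p$ above $\kappa_0$ and $\cf(\alpha)\ge\mu$, then before the extension $\max C_\alpha=\sup(\dom p\cap\alpha)<\kappa_0$, and afterwards $\sup(\dom p'\cap\alpha)=\kappa_0>\max C_\alpha$. You must simultaneously append $\kappa_0$ to $C_\alpha$; the fix is easy but the need for it is not addressed by your ``coherence is preserved and the remaining axioms are trivially verified.''

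Injectivity of $\Phi$: a condition $p$ may contain a successor ordinal $\bar\alpha+1>\kappa_0$ in its domain that lies on no thread (an isolated endpoint of $\dom p$, for instance). Such a point carries no information, since $p(\bar\alpha+1)$ is forced to be $\{\{\bar\alpha\}\}$, and it is silently discarded when you form $q$, whose domain consists only of limit ordinals. Two conditions differing only by such a superfluous successor therefore have the same image, so $\Phi$ as defined is not injective and the order-isomorphism claim fails as stated. Relatedly, your tightening condition on $D'$ --- that thread maxima at high-cofinality points of $\dom q$ equal the corresponding suprema of $\dom q$ --- is not satisfied by $\Phi(p)$ whenever $\max(\dom p\cap\alpha)$ is a successor, since then $\max C_\alpha$ exceeds $\sup(\dom q\cap\alpha)$. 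Both problems are repairable by further refining $D$ (e.g., to conditions where the only successors above $\kappa_0$ in $\dom p$ are those occurring on some thread, and where $\sup(\dom p\cap\alpha)$ is a limit for all $\alpha>\kappa_0$ of cofinality $\ge\mu$), but this refined $D$ requires its own density argument, which is where the real content of the lemma lives and which your proposal does not supply.
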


\begin{proof}
	As in \cite{indexed_squares}. 
\end{proof}

\begin{lem}
	Suppose that $\mu < \kappa$ are cardinals with $\mu$ regular and $\kappa$ inaccessible. 
	Let $\mathbb{P} = \mathbb{P}(\mu, < \kappa)$, let $\dot{G}$ be the canonical name for 
	the $\mathbb{P}$-generic, and let $\dot{\mathbb{T}} = \check{\mathbb{T}}_{\bigcup \dot{G}}$. 
	Then there is a dense subset of $\mathbb{P} \ast \dot{\mathbb{T}}$ which is 
	$< \mu$-closed and so in particular $\mathbb{P} \ast \dot{\mathbb{T}}$ is 
	forcing equivalent to $\Col{(\mu, \kappa)}$. 
	\label{square_star_threading_lem}
\end{lem}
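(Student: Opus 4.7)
The plan is to exhibit a $<\mu$-closed dense subset $D$ of $\mathbb{P} \ast \dot{\mathbb{T}}$; the equivalence with $\Col{(\mu, \kappa)}$ then follows from the standard uniqueness theorem for $<\mu$-closed forcings collapsing $\kappa$ to $\mu^+$. Take $D$ to consist of those $(p, \check{C}) \in \mathbb{P} \ast \dot{\mathbb{T}}$ with $C \in V$ and $\max{C} = \max{\dom{p}} =: \gamma$. Being a condition already forces $\Lim{C} \subseteq \dom{p}$ and $C \cap \beta \in p(\beta)$ for each $\beta \in \Lim{C}$, which is exactly what $p$ forcing $\check{C}$ to thread the generic $\square_{\mu,2}$-sequence requires. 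Density of $D$ is routine: first strengthen $p_0$ to decide the $\dot{\mathbb{T}}$-coordinate as a ground-model $\check{C_0}$, then extend the $\mathbb{P}$-coordinate to make the top of its domain coincide with $\max{C_0}$.

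The main step is $<\mu$-closure of $D$. Fix a decreasing sequence $\langle (p_\zeta, \check{C_\zeta}) : 1 \leq \zeta < \xi \rangle$ in $D$ of length a limit $\xi < \mu$, and let $\gamma_\zeta = \max{\dom{p_\zeta}} = \max{C_\zeta}$, $\gamma = \sup_\zeta \gamma_\zeta$, and $C = \bigcup_{\zeta < \xi} C_\zeta \cup \{\gamma\}$. Because the $C_\zeta$ end-extend one another and each is closed, $C$ is closed, $C \setminus \{\gamma\}$ is a club in $\gamma$ of order type $<\mu$, and $\cf{\gamma} \leq \cf{\xi} < \mu$. Build $p_\xi$ on domain $\bigcup_\zeta \dom{p_\zeta} \cup \Lim{\bigcup_\zeta \dom{p_\zeta}} \cup \{\gamma\}$ by taking the natural union of the $p_\zeta$'s on their joint domain (values unchanged at ordinals of cofinality $<\mu$, end-extended at cofinality $\geq \mu$), setting $p_\xi(\gamma) = \{C \setminus \{\gamma\}\}$ in case $\gamma$ is newly added, and defining $p_\xi$ on any internal newly added limit point $\alpha < \gamma$ via the club appearing in $p_\xi(\beta)$ at the least $\beta > \alpha$ in $\bigcup_\zeta \dom{p_\zeta}$ — which necessarily has cofinality $\geq \mu$ by the coherence analysis familiar from the strategic-closure proof of Lemma~\ref{basic_properties_of_p_lem}. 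The crux of the whole argument is that the thread $C$ supplies exactly the club at $\gamma$ that Player II had to manufacture in the strategic-closure proof, converting that conditional construction into an unconditional lower bound.

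The principal obstacle is verifying that $p_\xi$ is actually a legitimate $\mathbb{P}$-condition, and in particular that the coherence clause~(5) holds at the new assignment $p_\xi(\gamma) = \{C \setminus \{\gamma\}\}$. For each $\beta \in \Lim{(C \setminus \{\gamma\})}$ one automatically has $\cf{\beta} < \mu$ (being a limit point of a set of order type $<\mu$); hence $\beta$ already lies in $\dom{p_{\zeta_0}}$ for some sufficiently large $\zeta_0$ (by the coherence of $(p_{\zeta_0}, \check{C_{\zeta_0}})$ as a condition in $\mathbb{P} \ast \dot{\mathbb{T}}$), and clause~(a) of the $\mathbb{P}$-ordering gives $p_\xi(\beta) = p_{\zeta_0}(\beta)$, so that $(C \setminus \{\gamma\}) \cap \beta = C_{\zeta_0} \cap \beta \in p_{\zeta_0}(\beta) = p_\xi(\beta)$ as required. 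With $<\mu$-closure of $D$ in hand, the equivalence $\mathbb{P} \ast \dot{\mathbb{T}} \equiv \Col{(\mu, \kappa)}$ follows from the standard fact that a $<\mu$-closed forcing of cardinality $\leq \kappa$ collapsing $\kappa$ to $\mu^+$ is forcing equivalent to $\Col{(\mu, \kappa)}$, together with the bound $|\mathbb{P} \ast \dot{\mathbb{T}}| \leq \kappa$ obtained from $\kappa$-Knaster of $\mathbb{P}$ and the fact that $\dot{\mathbb{T}}$ is forced to have cardinality $\kappa = \mu^+$.
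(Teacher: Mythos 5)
Your proposal is correct and follows essentially the same route as the paper: pass to the dense set of flat conditions, show it is $<\mu$-closed by mimicking the strategic-closure construction while using the explicit thread $\bigcup_\zeta C_\zeta$ (rather than an arbitrary choice by Player II) at the new top coordinate $\gamma$, and then invoke Solovay's uniqueness theorem. One small slip at the end: it is $|D|\le\kappa$, not $|\mathbb{P}\ast\dot{\mathbb{T}}|\le\kappa$, that one needs and that one actually gets from inaccessibility of $\kappa$ (arbitrary $\mathbb{P}$-names for elements of $\dot{\mathbb{T}}$ need not number $\le\kappa$), but this does not affect the argument since you correctly reduce to forcing with $D$.
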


\begin{proof}
	Let $D$ be the dense set of conditions in $\mathbb{P} \ast \dot{\mathbb{T}}$ of the form 
	$(p, \check{t})$ which are \emph{flat} in the sense that $\max{(\dom{p})} = \max{t}$. 
	We claim that $D$ is as desired. To see this, suppose that $\nu < \mu$ is a limit 
	ordinal and let 
	$\langle (p_{\xi}, \check{t}_\xi) \colon \xi < \nu \rangle$ 
	be a descending sequence of conditions in $D$. We find a lower bound 
	$p^*$ for $\langle p_{\xi} \colon \xi < \nu \rangle$ as in the 
	limit case of Lemma \ref{basic_properties_of_p_lem}, except that we 
	set 
	\begin{align*}
		p^*\big( \sup_{\xi < \nu}{(\max{( \dom{p_{\xi}})} )} \big) = \curly{t^*}
	\end{align*}
	where 
	\begin{align*}
		t^* = \bigcup_{\xi < \nu}{t_{\xi}}
	\end{align*}
	Then $(p^*, \check{t}^*) \in D$ is our desired lower bound. Since $D$ is 
	$< \mu$-closed, $|D| = \kappa$, and $D$ forces $|\kappa| = \mu$, $D$ 
	is forcing equivalent to $\Col{(\mu, \kappa)}$ by a well-known result due to Solovay 
	(see, e.g. Lemma 2.3 of \cite{cc_weak_square}). 
\end{proof}

\section{Specializing Trees with Anticipation}
In this section we review the methods of \cite{golshani_hayut} for specializing trees 
while anticipating subsequent forcing. 

First we introduce the modified Baumgartner forcing which specializes a single tree 
while anticipating a single subsequent forcing. 

\begin{defn}[\cite{golshani_hayut}]
	Suppose that $\mu < \kappa$ are regular cardinals in $V$ and $\mathbb{I}^{2} \ast \dot{ \mathbb{I}}^1$ is a 
	$\kappa$-c.c. two-step iteration which forces $\kappa = \mu^+$. Suppose moreover that $\dot{T}$ is 
	an $\mathbb{I}^2 \ast \dot{\mathbb{I}}^1$-name for a $\kappa$-Aronszajn tree, which 
	we view as a subset of $\kappa \times \mu$. Then $\mathbb{B}_{\mu, \mathbb{I}^1}\!(\dot{T})$
	is defined in $V^{\mathbb{I}^2}$ as the poset of partial functions 
	$f \colon \kappa \times \mu \rightarrow \mu$ of size $< \mu$ such that if 
	$s, t \in \dom{f}$ and $f\!\paren{s} = f\!\paren{t}$, then 
	\begin{align*}
		\forces^{V^{\mathbb{I}^2}}_{\mathbb{I}^1} \check{s} \perp_{\dot{T}} \check{t}
	\end{align*}	
	The forcing is ordered by reverse inclusion. 
\end{defn}
If $\mu$ is understood (as it usually is) then we suppress the dependence on $\mu$ and write 
$\mathbb{B}_{\mathbb{I}^1}(\dot{T})$ in place of $\mathbb{B}_{\mu, \mathbb{I}^1}(\dot{T})$. 

\begin{lem}[\cite{golshani_hayut}]
	Suppose $\mu < \kappa$ are regular cardinals and, $\mathbb{I}^2 \ast \dot{\mathbb{I}}^1$ is 
	a $\kappa$-c.c. forcing which forces $\kappa = \mu^+$, $\dot{T}$ is an $\mathbb{I}^2 \ast 
	\mathbb{I}^1$-name for a $\kappa$-Aronszajn tree, and $G$ is $\mathbb{I}^2$-generic. 
	Then in $V\!\brac{G}$ the following 
	hold: 
	\begin{enumerate}[(a)]
	\item $\mathbb{B}_{\mathbb{I}^1}(\dot{T})$ is $< \mu$-closed. 
	\item In the extension by the generic for $\mathbb{B}_{\mathbb{I}^1}(\dot{T})$ there is 
		a function $F \colon \kappa \times \mu \rightarrow \mu$ which is 
		a specializing function for the tree $\dot{T}\brac{G}\brac{H}$ for \emph{any}
		$\mathbb{I}^1$-generic $H$. 
	\end{enumerate}
\end{lem}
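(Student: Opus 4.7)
For (a), the plan is a direct verification of $<\mu$-closure. Given a descending sequence $\langle f_\xi : \xi < \eta \rangle$ of conditions with $\eta < \mu$, I would set $f := \bigcup_{\xi < \eta} f_\xi$. Its domain has size $<\mu$ since it is a union of $\eta < \mu$ sets each of size $<\mu$ and $\mu$ is regular, and because the sequence is descending $f$ is a well-defined partial function. For the incompatibility clause, given distinct $s, t \in \dom f$ with $f(s) = f(t)$, the descending property lets me find a single $\zeta < \eta$ with $s, t \in \dom f_\zeta$, and then $f_\zeta(s) = f_\zeta(t)$ yields $\forces^{V[G]}_{\mathbb{I}^1} \check{s} \perp_{\dot T} \check{t}$ by the clause for $f_\zeta$.

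For (b), the key step is a density argument: for each $s \in \kappa \times \mu$, the set of conditions $f$ with $s \in \dom f$ is dense. Given $f$ with $s \notin \dom f$, I would define the ``forbidden'' set
\begin{align*}
A := \curly{ t \in \dom f : \text{not } \forces^{V[G]}_{\mathbb{I}^1} \check{s} \perp_{\dot T} \check{t} }
\end{align*}
together with the set of forbidden colors $B := \curly{ f(t) : t \in A } \subseteq \mu$. Since $|\dom f| < \mu$ and $\mu$ is regular, $|B| < \mu$, so I may pick any $\gamma \in \mu \setminus B$ and extend by defining $f(s) := \gamma$. The only new incompatibility constraints arise from $t \in \dom f$ with $f(t) = \gamma$, but any such $t$ lies outside $A$ by construction, so $\forces^{V[G]}_{\mathbb{I}^1} \check{s} \perp_{\dot T} \check{t}$ as required, and the extension is a condition.

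With density in hand, I let $K$ be $\mathbb{B}_{\mathbb{I}^1}(\dot T)$-generic over $V\brac{G}$ and set $F := \bigcup K \colon \kappa \times \mu \to \mu$. For any $\mathbb{I}^1$-generic $H$ over $V\brac{G}$, suppose $s <_{\dot T\brac{G}\brac{H}} t$; genericity gives $f \in K$ with $s, t \in \dom f$, and if $F(s) = F(t)$ then $f(s) = f(t)$ forces $\check{s} \perp_{\dot T} \check{t}$ in the $\mathbb{I}^1$-extension, contradicting $s <_{\dot T\brac{G}\brac{H}} t$. Hence $F(s) \neq F(t)$, so $F$ specializes $\dot T\brac{G}\brac{H}$.

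The main subtlety is the density argument, which rests entirely on the crude bound $|B| \leq |A| \leq |\dom f| < \mu$; this depends on both the size restriction in the definition of $\mathbb{B}_{\mathbb{I}^1}(\dot T)$ and the regularity of $\mu$. The rest is formal, and the distinctive content of the anticipation framework is that the incompatibility clause is stated at the level of $\mathbb{I}^1$-forcing, so that the single function $F$ specializes $\dot T\brac{G}\brac{H}$ uniformly across $\mathbb{I}^1$-generics $H$.
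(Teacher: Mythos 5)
The paper cites this lemma from Golshani--Hayut without giving its own proof, so there is no in-paper argument to compare against. Your proof is correct and is essentially the standard Baumgartner/Laver--Shelah argument adapted to the anticipation setting: $<\mu$-closure via unions of chains, and totality of the generic function via the density argument, using regularity of $\mu$ to bound the forbidden colour set. One cosmetic point: the definition of $\mathbb{B}_{\mathbb{I}^1}(\dot T)$ as quoted technically requires $\forces \check s \perp_{\dot T} \check s$ whenever $s \in \dom f$, which is vacuously false; the intended reading (and the one you use implicitly) is that the incompatibility clause applies only to distinct $s, t$. Your density step also uses that a $\gamma \in \mu \setminus B$ actually exists, which is fine since $|B| < \mu$, and your final verification correctly exploits the absoluteness of the forcing statement $\forces^{V[G]}_{\mathbb{I}^1} \check s \perp_{\dot T} \check t$ to conclude that the single generic $F$ works uniformly for every $\mathbb{I}^1$-generic $H$.
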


Now we describe the general form of iterations $\vec{\mathbb{I}}^2$ and $\vec{\mathbb{I}}^1$ such that 
$\vec{\mathbb{I}}^2$ specializes \emph{all} $\kappa$-Aronszajn trees while anticipating forcing 
by $\vec{\mathbb{I}}^1$. 

\begin{defn}
	Suppose that $\mu < \kappa < \kappa^+ \leq \delta$ are regular cardinals in $V$, and the iterations 
\begin{align*}
	\vec{\mathbb{I}}^2 &= \langle \langle \mathbb{I}^2_{\gamma} \colon \gamma \leq \delta \rangle, 
	\langle \dot{\mathbb{J}}^{2}_{\gamma} \colon \gamma < \delta \rangle \rangle \\
	\dot{\vec{\mathbb{I}}}^1 &= \langle \langle \dot{\mathbb{I}}^1_{\gamma} \colon \gamma \leq \delta \rangle, 
	\langle \dot{\mathbb{J}}^{1}_{\gamma} \colon \gamma < \delta \rangle \rangle 
\end{align*}
are as follows: 

\begin{itemize}
	\item $\mathbb{I}^2_1 = \mathbb{P}(\mu, < \kappa)$, the forcing which collapses 
		$\kappa$ to $\mu^+$ while adding $\square_{\mu, 2}$. 
	\item $\mathbb{I}^2_{\gamma}$ is the iteration with $< \mu$-support 
		of $\langle \dot{\mathbb{J}}^{2}_{\gamma'} \colon \gamma' < \gamma \rangle$. In other 
		words, if $\gamma$ is a limit ordinal of cofinality $\geq \mu$, then 
		$\mathbb{I}^2_{\gamma}$ is the direct limit of $\langle \mathbb{I}^2_{\gamma'} \colon 
		\gamma' < \gamma \rangle$, if $\gamma$ is a limit ordinal of cofinality $< \mu$, then 
		$\mathbb{I}^2_{\gamma}$ is the inverse limit of $\langle \mathbb{I}^2_{\gamma'} \colon 
		\gamma' < \gamma \rangle$, and if $\gamma = \bar{\gamma} + 1$ is a successor ordinal then 
		$\mathbb{I}^2_{\gamma} = \mathbb{I}^2_{\bar{\gamma}} \ast \dot{\mathbb{J}}^2_{\bar{\gamma}}$. 
	\item Each $\dot{\mathbb{I}}^1_{\gamma}$ is an $\mathbb{I}^2_{\gamma}$-name for a 
		$\mu$-c.c. poset. 
	\item $\dot{\mathbb{J}}^2_{\gamma}$ is a name for the poset 
		$\mathbb{B}_{\mathbb{I}^1_{\gamma}}(\dot{T}_\gamma)$, where 
		$\dot{T}_{\gamma}$ is an $\mathbb{I}^2_\gamma \ast \dot{\mathbb{I}}^1_{\gamma}$-name 
		for a $\kappa$-Aronszajn tree, chosen according to some appropriate bookkeeping function. 
\end{itemize}
Then we refer to $\vec{\mathbb{I}}^2$ as an ``iteration which collapses $\kappa$ to $\mu^+$, adds $\square_{\mu, 2}$ and 
specializes all $\kappa$-Aronszajn trees while
anticipating the subsequent iteration $\vec{\mathbb{I}}^1$'' (or some similar locution for the sake of 
brevity). 

\end{defn}

\begin{defn}[\cite{golshani_hayut}]
	Suppose that $\mu < \kappa < \delta$ are regular cardinals and $\vec{\mathbb{I}}^2$ is an 
	iteration of length $\delta$ which collapses $\kappa$ to $\mu^+$, adds $\square_{\mu, 2}$, 
	and anticipates the iteration $\dot{\vec{\mathbb{I}}}^1$ in the sense described above. 
	Let $\gamma \leq \delta$ be some ordinal and 
	suppose that $M$ is an elementary substructure of $H(\theta)$ ($\theta$ sufficiently large)
	of cardinality $\kappa$ such that $V_{\kappa} \cup M^{< \kappa} \cup \curly{\gamma} \subseteq M$ and 
	$M$ contains all relevant parameters. 
	Furthermore, let $\phi \colon \kappa \rightarrow M$ be a 
	bijection and for all $\alpha < \kappa$ set $M_\alpha = \phi `` \alpha$. We say that 
	$\vec{\mathbb{I}}^2$, $\dot{\vec{\mathbb{I}}}^1$ are \emph{suitable for $M$, $\phi$}, $\gamma$ if: 
	\begin{enumerate}[(1)]
		\item For all $\bar\gamma \leq \gamma$, 
			$\forces_{\mathbb{I}^2_{\gamma}}$ ``$\dot{\mathbb{I}}^1_{\bar{\gamma}}$ is 
			$\mu$-c.c.'' 
		\item For all $\alpha < \kappa$ and $\bar{\gamma} \in M_\alpha \cap \gamma$, if: 
			\begin{enumerate}[(a)]
				\item $\mathbb{I}^2_{\bar\gamma} \cap M_\alpha$ is a regular subposet of 
					$\mathbb{I}^2_{\bar\gamma} \cap M$. 
				\item $\forces_{\mathbb{I}^2_{\bar\gamma} \cap M}$ ``$\dot{\mathbb{I}}^1_{\bar\gamma} \cap 
					M_\alpha$ is a regular subiteration of $\dot{\mathbb{I}}^1_{\bar\gamma} \cap M$.''
				\item $\dot{T}_{\bar\gamma} \cap M_\alpha$ is an $(\mathbb{I}^2_{\bar\gamma} \cap M_\alpha) 
					\ast (\dot{\mathbb{I}}^1_{\bar\gamma} \cap M_\alpha)$-name for an 
					$\alpha$-Aronszajn tree. 
			\end{enumerate}
			Then forcing with $(\mathbb{I}^2_{\bar\gamma} \cap M) \ast (\dot{\mathbb{I}}^1_{\bar\gamma} 
			\cap M) / G$, where $G$ is generic for 
			$(\mathbb{I}^2_{\bar\gamma} \cap M) \ast (\dot{\mathbb{I}}^1_{\bar\gamma} \cap M_\alpha)$,
			doesn't add any new branches to the tree named by $\dot{T}_{\bar\gamma} \cap M_\alpha$. 
	\end{enumerate}
	\label{suitability_defn}
\end{defn}

We will need to make use of the following basic lemma about forcings which don't add branches to trees: 

\begin{lem}[Folklore, see \cite{cummings_foreman}, \cite{kunen_tall}]
	Suppose that $T$ is a $\kappa$-tree and $\mathbb{P}$ is a 
	$\kappa$-Knaster poset. Then forcing with $\mathbb{P}$ doesn't 
	add a branch to $T$. 
	\label{doesnt_add_branch_lem}
\end{lem}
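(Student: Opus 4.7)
My plan is to argue by contradiction. Suppose $p \in \mathbb{P}$ forces $\dot{b}$ to be a new cofinal branch of $T$; I will extract a cofinal branch of $T$ in the ground model, which in the intended applications (where the relevant trees are $\kappa$-Aronszajn) yields an immediate contradiction.

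First I would, for each $\alpha < \kappa$, use the density of conditions below $p$ deciding $\dot{b}_\alpha$ to choose $p_\alpha \leq p$ and $x_\alpha \in T_\alpha$ with $p_\alpha \Vdash \check{x}_\alpha = \dot{b}_\alpha$. Applying the $\kappa$-Knaster property of $\mathbb{P}$ to the family $\{p_\alpha : \alpha < \kappa\}$ yields $A \in [\kappa]^\kappa$ with $\{p_\alpha : \alpha \in A\}$ pairwise compatible. For any $\alpha < \beta$ in $A$, a common extension of $p_\alpha$ and $p_\beta$ forces both $\check{x}_\alpha, \check{x}_\beta \in \dot{b}$; since $\dot{b}$ names a chain in $T$, we must have $x_\alpha <_T x_\beta$. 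Hence $\{x_\alpha : \alpha \in A\}$ is a chain in $T$ whose levels are cofinal in $\kappa$, and its downward closure yields a cofinal branch $b^* \in V$. In the Aronszajn setting the mere existence of $b^*$ contradicts the hypothesis on $T$, completing the proof.

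The main obstacle, if one wishes to argue for arbitrary $\kappa$-trees rather than specifically $\kappa$-Aronszajn ones, is closing the gap between ``a ground-model cofinal branch of $T$ exists'' and ``$\dot{b}$ is forced to equal a ground-model branch.'' A generic below $p$ need not contain any $p_\alpha$ for $\alpha \in A$, so the branch $b^*$ extracted above is not obviously forced to coincide with the generic branch $\dot{b}[G]$. In the paper's actual applications—most notably the trees $\dot{T}_{\bar{\gamma}} \cap M_\alpha$ appearing in Definition \ref{suitability_defn}, which are explicitly Aronszajn—this subtlety does not arise. In full generality one would pass to the product forcing $\mathbb{P} \times \mathbb{P}$ (which inherits $\kappa$-Knaster from $\mathbb{P}$ by a standard two-step Knaster argument), consider left and right generic branches $\dot{b}_L, \dot{b}_R$, and exploit the mutual-genericity identity $V[G_L] \cap V[G_R] = V$ together with an analogous Knaster extraction in each coordinate; but this refinement is not needed for the applications at hand.
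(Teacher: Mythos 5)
The paper cites this lemma as folklore (deferring to \cite{cummings_foreman}, \cite{kunen_tall}) and supplies no proof of its own, so there is no in-paper argument to compare against; I can only assess your attempt on its own terms. Your main argument---choose $p_\alpha \leq p$ deciding $\dot{b}_\alpha = \check{x}_\alpha$, apply $\kappa$-Knaster to thin to a pairwise-compatible family, observe that $\langle x_\alpha : \alpha \in A\rangle$ is then $<_T$-increasing and so generates a cofinal branch $b^* \in V$---is correct and complete under the extra hypothesis that $T$ is $\kappa$-Aronszajn. You are also right that this is all the paper actually uses: the application in the proof of Lemma \ref{countably_many_chain_condition_lem} is to the Aronszajn trees named by $\dot{T}_{\bar\gamma} \cap M_\alpha$ from Definition \ref{suitability_defn}. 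And you correctly flag that, for the lemma as literally stated, your argument stops short: producing a ground-model branch does not show that the forced branch $\dot{b}$ is itself forced to equal a ground-model one.

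The one place I would push back is on your sketched repair for the general case. The product $\mathbb{P}\times\mathbb{P}$ and the mutual-genericity identity $V[G_L]\cap V[G_R]=V$ are indeed the right tools, but the closing step is not ``an analogous Knaster extraction in each coordinate''---performing the same extraction on each side just produces two ground-model branches and no contradiction. The standard finish instead uses only the $\kappa$-c.c.\ of $\mathbb{P}\times\mathbb{P}$ (which, as you note, follows from productivity of Knaster): since $(p,p)\Vdash\dot b_L\ne\dot b_R$, the name $\dot\delta$ for the level of first divergence is forced to be $<\kappa$ and is therefore bounded below some $\delta^*<\kappa$ by a small maximal antichain (using regularity of $\kappa$); thus $(p,p)\Vdash\dot b_L(\delta^*)\ne\dot b_R(\delta^*)$. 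But densely many $q\leq p$ decide $\dot b(\delta^*)$ to be some $\check{x}\in T_{\delta^*}$, and then the diagonal condition $(q,q)\leq(p,p)$ forces $\dot b_L(\delta^*)=\check x=\dot b_R(\delta^*)$---a contradiction with no further thinning. So the fix is a single-condition diagonal argument driven by bounding $\dot\delta$, not a second Knaster refinement. This is a minor point since you explicitly presented it as an unverified sketch, but worth correcting if you intend to prove the lemma in the generality stated.
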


\section{Obtaining $\square_{\omega_1, 2}$ + $\mathsf{SATP}(\aleph_2)$ + 
$\mathsf{SATP}(\aleph_1)$}

\begin{theorem}
	Suppose that $\mu < \kappa < \kappa^+ \leq \delta$ are cardinals with 
	$\mu$, $\delta$ regular and $\kappa$ weakly compact. 
	Suppose moreover that 
	\begin{align*}
		\vec{ \mathbb{I}}^2 &= \langle \langle \mathbb{I}^2_{\gamma} \colon \gamma \leq \delta 
		\rangle, \langle \dot{\mathbb{J}}^2_{\gamma} \colon \gamma < \delta \rangle \rangle \\
		\dot{\vec{\mathbb{I}}}^1 &= 
		\langle \langle \dot{\mathbb{I}}^1_{\gamma} \colon \gamma \leq \delta \rangle, 
		\langle \dot{\mathbb{J}}^1_{\gamma} \colon \gamma < \delta \rangle \rangle 
	\end{align*}
	are two iterations such that $\vec{\mathbb{I}}^2$ collapses $\kappa$ to $\mu^+$, 
	adds $\square_{\mu, 2}$, and specializes all $\kappa$-Aronszajn trees while 
	anticipating $\vec{\mathbb{I}}^1$ (in the sense described in the previous section). 

	Finally, suppose that for all ordinals $\gamma \leq \delta$
	there exists $M$ elementary in $H(\theta)$ ($\theta$ sufficiently large)
	of cardinality $\kappa$ such that $V_\kappa \cup M^{< \kappa} \cup \curly{\gamma} 
	\subseteq M$, $M$ contains 
	all relevant parameters, and $\vec{\mathbb{I}}^2$, $\dot{\vec{\mathbb{I}}}^1$ are suitable 
	for $M$, $\phi$, $\gamma$ (for some fixed bijection $\phi \colon \kappa \rightarrow M$). 
	Then the generic extension by $\mathbb{I}^2_\delta \ast \dot{\mathbb{I}}^1_{\delta}$ 
	satisfies
	\begin{align*}
		\kappa = \mu^+ \land \square_{\mu, 2} 
		\land \mathsf{SATP}(\kappa) \land 2^{\mu} \geq \delta
	\end{align*}
	\label{main_theorem}
\end{theorem}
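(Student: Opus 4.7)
The plan is to verify the four conclusions in turn, reserving the main work for $\mathsf{SATP}(\kappa)$ at the end.

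For $\kappa = \mu^+$ and $\square_{\mu, 2}$: the first factor $\mathbb{I}^2_1 = \mathbb{P}(\mu, < \kappa)$ collapses $\kappa$ to $\mu^+$ and adds a $\square_{\mu, 2}$-sequence $\vec{\mathcal{C}}$ by Lemma~\ref{basic_properties_of_p_lem}. To see that $\vec{\mathcal{C}}$ and the cardinal $\mu^+ = \kappa$ survive the remaining forcing, I would first argue that the tail $\mathbb{I}^2_\delta / \mathbb{I}^2_1$ is $< \mu$-closed in $V[G \cap \mathbb{I}^2_1]$: each Baumgartner factor $\mathbb{B}_{\mathbb{I}^1_\gamma}(\dot{T}_\gamma)$ is $< \mu$-closed, and the $< \mu$-support iteration preserves this closure when inverse limits are taken at cofinalities $< \mu$. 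Hence no bounded subsets of $\mu$ are added, so $\mu$ is preserved and the threading and order-type clauses defining $\vec{\mathcal{C}}$ remain valid. A standard $\Delta$-system argument using inaccessibility of $\kappa$ in $V$ gives that $\mathbb{I}^2_\delta$ is $\kappa$-Knaster; together with the $\mu$-c.c.\ hypothesis on each $\dot{\mathbb{I}}^1_\gamma$, this preserves $\kappa$.

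The bound $2^\mu \geq \delta$ is essentially immediate: each step $\dot{\mathbb{J}}^2_\gamma$ adjoins a generic partial function $\kappa \times \mu \to \mu$ of size $\kappa$, and standard mutual-genericity arguments yield $\delta$-many distinct subsets of $\mu$ in the final extension.

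For $\mathsf{SATP}(\kappa)$, let $T \subseteq \kappa \times \mu$ be a $\kappa$-Aronszajn tree in $V[G \ast H]$ with name $\dot{T}$. Apply the hypothesis at $\gamma = \delta$ to obtain $M \prec H(\theta)$ of cardinality $\kappa$ with $V_\kappa \cup M^{< \kappa} \subseteq M$, containing $\dot{T}$ and all relevant parameters, for which $\vec{\mathbb{I}}^2, \dot{\vec{\mathbb{I}}}^1$ are suitable, together with the bijection $\phi \colon \kappa \to M$ and the enumeration $M_\alpha = \phi``\alpha$. By $\kappa$-Knasterness of $\mathbb{I}^2_\delta$ and $\mu$-c.c.\ of $\dot{\mathbb{I}}^1_\delta$, we may replace $\dot{T}$ by an equivalent name lying in $M$. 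The bookkeeping used to choose the $\dot{T}_\gamma$'s is assumed to enumerate all such candidate names cofinally in $\delta$, so fix $\bar\gamma \in M \cap \delta$ with $\dot{T}_{\bar\gamma}$ forced equivalent to $\dot{T}$. Then $\dot{\mathbb{J}}^2_{\bar\gamma}$ adds a function $F \colon \kappa \times \mu \to \mu$ which specializes the tree named by $\dot{T}_{\bar\gamma}$ with respect to \emph{any} $\dot{\mathbb{I}}^1_{\bar\gamma}$-generic over $V[G_{\bar\gamma+1}]$.

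The principal obstacle is showing that $F$ remains a specializing function for $T$ in the full extension $V[G \ast H]$, since in principle the tail forcing $(\mathbb{I}^2_\delta / \mathbb{I}^2_{\bar\gamma+1}) \ast \dot{\mathbb{I}}^1_\delta$ could introduce new comparabilities between nodes assigned the same value by $F$; such a new comparability would correspond to a new cofinal branch through some initial segment of $T$. This is where the suitability hypothesis is essential. Filtering through the substructures $M_\alpha$ for $\alpha < \kappa$, clause (2) of Definition~\ref{suitability_defn} ensures that the quotient forcings $(\mathbb{I}^2_{\bar\gamma} \cap M) \ast (\dot{\mathbb{I}}^1_{\bar\gamma} \cap M) / G$ add no new branches to the reflected $\alpha$-Aronszajn trees $\dot{T}_{\bar\gamma} \cap M_\alpha$. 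Combining this with Lemma~\ref{doesnt_add_branch_lem} applied to the $\kappa$-Knaster tail forcing, a level-by-level reflection argument shows that no new cofinal branch through $T$ is introduced past stage $\bar\gamma + 1$, so $F$ remains a specialization of $T$ in $V[G \ast H]$. Since $T$ was arbitrary, $\mathsf{SATP}(\kappa)$ holds.
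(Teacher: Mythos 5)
Your outline correctly identifies the four conclusions and handles $2^\mu \geq \delta$, the preservation of $\mu$, and the general shape of the SATP argument in a reasonable way, but it contains one fatal gap and one confused detour.

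The fatal gap is the sentence ``A standard $\Delta$-system argument using inaccessibility of $\kappa$ in $V$ gives that $\mathbb{I}^2_\delta$ is $\kappa$-Knaster.'' This is not true, and proving the $\kappa$-Knaster property of the iteration is in fact the core difficulty of the entire paper. A $\Delta$-system argument tells you that any $\kappa$-sequence of conditions has a subsequence of size $\kappa$ whose supports form a $\Delta$-system with some root; it does not tell you that two such conditions can be amalgamated, because the Baumgartner coordinates $\mathbb{B}_{\mathbb{I}^1_{\gamma'}}(\dot{T}_{\gamma'})$ on the root may force incompatible specialization constraints (two nodes given the same color but potentially comparable in the tree). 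The paper's Lemma~\ref{chain_condition_lem} handles this by induction on $\gamma$, and the successor and small-cofinality cases require the full machinery: the weak compactness of $\kappa$ (via a $\Pi^1_1$-reflection filter $\mathcal{F}$), the construction of $\alpha$-separating witnesses (Claim~\ref{separation_claim}), and --- crucially and novelly --- the ``two-ness'' of the square poset, which allows both threads $t^L_\nu$ and $t^R_\nu$ to be placed on both reflected conditions $q^L_\nu(0)$ and $q^R_\nu(0)$ so that $p^L_*$ and $p^R_*$ can agree on $M_\alpha$. The suitability hypothesis is also used here (in the subclaim via mutual genericity), not in the SATP step. By omitting all of this you have assumed the heart of the theorem. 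Note also that $\mathbb{I}^2_\delta$ is only $<\mu$-strategically closed, not $<\mu$-closed, since $\mathbb{P}(\mu,<\kappa)$ is only strategically closed; this matters when you run the closure argument.

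The SATP step in your proposal also misidentifies the ``principal obstacle.'' You worry that the tail forcing could ``introduce new comparabilities'' in $T$, treating this as a branch-adding issue that requires suitability. But once $\dot{T}_{\bar\gamma}$ is a fixed $\mathbb{I}^2_{\bar\gamma}\ast\dot{\mathbb{I}}^1_{\bar\gamma}$-name for a subset of $\kappa\times\mu$, its interpretation and tree order are determined by the generics through stage $\bar\gamma$, and specialization is upward absolute: if $F(s)=F(t)$ implies $s\perp_T t$ in $V[G_{\bar\gamma+1}][H_{\bar\gamma}]$, this persists in any larger model. The paper's actual argument is much shorter: by the $\kappa$-c.c.\ of $\mathbb{I}^2_\delta\ast\dot{\mathbb{I}}^1_\delta$ (Lemma~\ref{second_chain_condition_lem}), every $T\subseteq\kappa\times\mu$ in the final model has a name appearing at some $\gamma<\delta$; by bookkeeping, that name is some $\dot{T}_\gamma$; $\dot{\mathbb{J}}^2_\gamma$ specializes it for any $\mathbb{I}^1_\gamma$-generic; and since cardinals are preserved past stage $\gamma+1$, $T$ remains a special $\kappa$-Aronszajn tree. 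The branch-preservation clause (2) of suitability is needed inside the chain-condition proof, not here.
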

The majority of the remainder of this section is devoted to giving a proof of this result. 
We follow closely the proof of the main theorem in \cite{golshani_hayut}.

\begin{lem}
	For every $\gamma \leq \delta$, $\mathbb{I}^2_{\gamma}$ is $< \mu$ strategically 
	closed. 
\end{lem}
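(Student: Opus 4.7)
The proof plan is by induction on $\gamma \leq \delta$. The base cases $\gamma = 0$ (trivial forcing) and $\gamma = 1$ (Lemma \ref{basic_properties_of_p_lem}) are immediate. At a successor step $\gamma = \bar{\gamma} + 1$, the iterand $\dot{\mathbb{J}}^2_{\bar{\gamma}} = \mathbb{B}_{\mathbb{I}^1_{\bar{\gamma}}}(\dot{T}_{\bar{\gamma}})$ is forced by $\mathbb{I}^2_{\bar{\gamma}}$ to be $<\mu$-closed by part (a) of the preceding Golshani--Hayut lemma. Combining Player II's inductive winning strategy on $\mathbb{I}^2_{\bar{\gamma}}$ with the trivial ``take any lower bound at limits, extend arbitrarily at successors'' strategy on the $<\mu$-closed second factor yields a winning strategy on $\mathbb{I}^2_\gamma$. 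At a direct limit with $\cf(\gamma) \geq \mu$, any play of length $\nu < \mu$ has total support of size $<\mu$ and hence bounded below $\gamma$, so the game factors through some $\mathbb{I}^2_{\bar{\gamma}}$ with $\bar{\gamma} < \gamma$ and the inductive strategy on that factor suffices.

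The main obstacle is the inverse-limit case $\cf(\gamma) < \mu$, where a descending play may genuinely involve coordinates cofinal in $\gamma$, so it cannot be reduced to a game on a single $\mathbb{I}^2_{\bar{\gamma}}$. My plan here is to have Player II play coordinate-by-coordinate. She fixes the winning strategy $\sigma$ from Lemma \ref{basic_properties_of_p_lem} and plays it exclusively on the $0$-th coordinate (the $\mathbb{P}(\mu, <\kappa)$ factor, which is the only iterand that is merely strategically closed), while on each coordinate $\alpha > 0$ she uses the forced $<\mu$-closure of $\dot{\mathbb{J}}^2_\alpha$ over $V^{\mathbb{I}^2_\alpha}$. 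At each of her moves $\xi$ she plays a condition $p_\xi$ with domain $\bigcup_{\zeta < \xi} \dom(p_\zeta)$, setting $p_\xi(0)$ equal to $\sigma$'s response to $\langle p_\zeta(0) : \zeta < \xi \rangle$, and choosing $p_\xi(\alpha)$ for each $\alpha > 0$ in the support to be an $\mathbb{I}^2_\alpha$-name which $p_\xi \restriction \alpha$ forces to be a lower bound of $\langle p_\zeta(\alpha) : \zeta < \xi,\ \alpha \in \dom(p_\zeta) \rangle$ in $\dot{\mathbb{J}}^2_\alpha$; such a name is furnished in $V$ by the maximum principle.

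The remaining verification is that $p_\xi$, assembled coordinate-by-coordinate in this way, is a legitimate condition in the inverse limit and genuinely extends every previous $p_\zeta$. The support constraint $|\dom(p_\xi)| < \mu$ holds by the regularity of $\mu$ together with the fact that each $|\dom(p_\zeta)| < \mu$ and $\xi < \mu$; the $0$-th coordinate lies in $\mathbb{P}(\mu, <\kappa)$ because $\sigma$ is winning in $G(\mathbb{P}(\mu, <\kappa), \nu)$; and the remaining coordinates are iteration-coherent by construction. Hence this coordinate-wise strategy is winning for Player II in $G(\mathbb{I}^2_\gamma, \nu)$, completing the induction.
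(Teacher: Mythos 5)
Your proposal is correct and takes essentially the same route as the paper's one-line proof, which factors $\mathbb{I}^2_\gamma$ as $\mathbb{P}(\mu, <\kappa)$ (the only merely strategically closed iterand) followed by a $<\mu$-support iteration of $<\mu$-closed posets, and invokes the standard facts that such a tail iteration is $<\mu$-closed and that a strategically closed forcing followed by a closed one remains strategically closed. Your coordinate-by-coordinate strategy (run $\sigma$ on coordinate $0$, take lower bounds via the maximum principle elsewhere) is precisely the proof of those standard facts spelled out; the one phrasing to tighten is the direct-limit case, where it is not that the whole game ``factors through'' a fixed $\bar\gamma$ chosen in advance, but that the same coordinate-wise strategy keeps Player II's support bounded below $\gamma$ at each move because $\cf(\gamma)\geq\mu$.
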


\begin{proof}
	The forcing $\mathbb{I}^{2}_{\gamma}$ is a $< \mu$-strategically closed forcing 
	(namely, $\mathbb{P}(\mu, < \kappa)$) followed by the $< \mu$-support iteration of 
	$< \mu$-closed posets. 
\end{proof}

\begin{lem}
	For every $\gamma \leq \delta$, $\forces_{\mathbb{I}^2_\gamma} ``\dot{\mathbb{I}}^1_{\gamma}$ is 
	$\mu$-c.c.'' and $\forces_{\mathbb{I}^2_\delta} `` \dot{\mathbb{I}}^1_{\gamma}$ is $\mu$-c.c.''
	\label{chain_condition_of_I1_lem}
\end{lem}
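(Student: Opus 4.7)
The first assertion is immediate from the theorem's hypothesis. For any $\gamma \leq \delta$, the hypothesis provides a model $M$ witnessing that $\vec{\mathbb{I}}^2, \dot{\vec{\mathbb{I}}}^1$ are suitable for $M, \phi, \gamma$, and clause (1) of Definition \ref{suitability_defn} with $\bar\gamma = \gamma$ is precisely the statement $\forces_{\mathbb{I}^2_\gamma}$ ``$\dot{\mathbb{I}}^1_\gamma$ is $\mu$-c.c.''

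For the second assertion, the plan is to factor $\mathbb{I}^2_\delta \cong \mathbb{I}^2_\gamma \ast \dot{\mathbb{I}}^2_{[\gamma, \delta)}$, where $\dot{\mathbb{I}}^2_{[\gamma, \delta)}$ is an $\mathbb{I}^2_\gamma$-name for the tail of the iteration. In $V^{\mathbb{I}^2_\gamma}$ this tail is a $<\mu$-support iteration of the Baumgartner-type posets $\mathbb{B}_{\mathbb{I}^1_{\bar\gamma}}(\dot T_{\bar\gamma})$ for $\bar\gamma \in [\gamma, \delta)$, each of which is $<\mu$-closed by clause (a) of the preceding lemma on $\mathbb{B}_{\mathbb{I}^1}(\dot T)$. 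A standard argument, analogous to the limit case in the proof of Lemma \ref{basic_properties_of_p_lem}, then shows the tail is $<\mu$-strategically closed in $V^{\mathbb{I}^2_\gamma}$.

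The remaining step is to invoke the folklore preservation result that $<\mu$-strategically closed forcing preserves the $\mu$-c.c.\ of a ground-model poset. Concretely: if $\mathbb{P}$ is $\mu$-c.c.\ in $W$ and $\mathbb{Q}$ is $<\mu$-strategically closed over $W$, then any hypothetical antichain $A \subseteq \mathbb{P}$ of size $\mu$ in $W^{\mathbb{Q}}$ must satisfy $|A|^W \geq \mu$ (since $\mathbb{Q}$ preserves cardinals $\leq \mu$, in particular $\mu$ itself), and must remain an antichain in $W$ (since incompatibility of pairs $p, p' \in \mathbb{P} \subseteq W$ is absolute between $W$ and $W^\mathbb{Q}$), contradicting the $\mu$-c.c.\ of $\mathbb{P}$ in $W$. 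Applying this with $W = V^{\mathbb{I}^2_\gamma}$, $\mathbb{Q} = \dot{\mathbb{I}}^2_{[\gamma, \delta)}$, and $\mathbb{P} = \dot{\mathbb{I}}^1_\gamma$ transfers the $\mu$-c.c.\ of $\dot{\mathbb{I}}^1_\gamma$ from $V^{\mathbb{I}^2_\gamma}$ to $V^{\mathbb{I}^2_\delta}$.

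The main obstacle, such as it is, is verifying $<\mu$-strategic closure of the tail iteration, which is where the $<\mu$-support format of $\vec{\mathbb{I}}^2$ is used essentially; the preservation lemma itself is a routine absoluteness argument.
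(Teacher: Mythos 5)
Your proof of the first assertion is correct and matches the paper. For the second assertion, however, you missed the simpler intended route and substituted an argument that has a genuine gap.

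The intended argument: the theorem's hypothesis provides suitability for \emph{every} $\gamma \leq \delta$, in particular for $\gamma = \delta$. Clause (1) of Definition \ref{suitability_defn} with parameter $\delta$ reads ``for all $\bar\gamma \leq \delta$, $\forces_{\mathbb{I}^2_\delta}$ `$\dot{\mathbb{I}}^1_{\bar\gamma}$ is $\mu$-c.c.'\,'' Taking $\bar\gamma = \gamma$ yields the second assertion directly. This is exactly why the paper treats the lemma as immediate from the definition of suitability; the whole point of requiring chain conditions to be forced over $\mathbb{I}^2_\gamma$ (rather than $\mathbb{I}^2_{\bar\gamma}$) in the definition is to have the second assertion available without further work.

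The argument you gave instead invokes the claim that $<\mu$-strategically closed forcing preserves the $\mu$-c.c.\ of a ground-model poset. This is not a valid folklore lemma, and your justification for it has a concrete gap: given a name $\dot A$ for an antichain of size $\mu$ in $W^{\mathbb{Q}}$, you immediately speak of $|A|^W$ and of $A$ being an antichain ``in $W$,'' but nothing guarantees $A \in W$. A $<\mu$-strategically closed $\mathbb{Q}$ adds no new $<\mu$-sequences, yet it can certainly add new subsets of $\mathbb{P}$ of size $\mu$, so the absoluteness considerations you cite do not get off the ground. In general, distributive or strategically closed forcing can destroy $\mu$-c.c.\ (this is precisely why one works with the Knaster property in the rest of the paper, which behaves better under iteration). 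If you wanted a preservation-style argument here you would need to exploit much more of the specific structure of $\mathbb{I}^2_{[\gamma, \delta)}$, but there is no need: the definition of suitability already hands you the conclusion.
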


\begin{proof}
	This is immediate from the definition of suitability of $\vec{\mathbb{I}}^2$, $\dot{\vec{\mathbb{I}}}^1$. 
\end{proof}

\begin{lem}
	For every $\gamma \leq \delta$, $\mathbb{I}^{2}_{\gamma}$ is $\kappa$-Knaster. 
	\label{chain_condition_lem}
\end{lem}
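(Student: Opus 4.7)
The plan is to prove the lemma by transfinite induction on $\gamma \leq \delta$. The base case $\gamma = 1$ is immediate from Lemma \ref{basic_properties_of_p_lem}. Rather than directly carrying $\kappa$-Knaster through the induction, I will first establish the $\kappa$-chain condition at every stage and then upgrade to $\kappa$-Knaster using the weak compactness of $\kappa$.

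At a successor stage $\gamma = \bar\gamma + 1$, the induction hypothesis gives that $\mathbb{I}^2_{\bar\gamma}$ is $\kappa$-c.c.\ Since $\kappa$-c.c.\ is preserved by two-step iteration, it suffices to show that $\mathbb{B}_{\mathbb{I}^1_{\bar\gamma}}(\dot{T}_{\bar\gamma})$ is forced by $\mathbb{I}^2_{\bar\gamma}$ to be $\kappa$-c.c. This is the standard Baumgartner $\Delta$-system argument adapted to the anticipatory setting: given $\kappa$ conditions (partial functions of size $< \mu$), extract a $\Delta$-system with root $R$ of size $< \mu$, refine by pigeonhole so that all restrictions to $R$ agree (using $\mu^{< \mu} < \kappa$, which follows from inaccessibility), and then exploit the $\kappa$-Aronszajn structure of $\dot{T}_{\bar\gamma}$ together with the $\mu$-c.c.\ of $\dot{\mathbb{I}}^1_{\bar\gamma}$ (given by Lemma \ref{chain_condition_of_I1_lem}) to find a $\kappa$-sized pairwise compatible subfamily. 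At a limit stage $\gamma$, I apply the $\Delta$-system lemma to the supports $\supp(p_\alpha)$ of a putative $\kappa$-antichain to obtain a root $R$ with $|R| < \mu$: when $\cf(\gamma) \geq \mu$, $R$ is bounded below some $\gamma' < \gamma$ and compatibility reduces to the induction hypothesis for $\mathbb{I}^2_{\gamma'}$; when $\cf(\gamma) < \mu$, I use the $<\mu$-strategic closure established in the preceding lemma to reduce to the bounded-support case.

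Once $\kappa$-c.c.\ is in hand at stage $\gamma$, the upgrade to $\kappa$-Knaster is immediate: given $\{p_\alpha : \alpha < \kappa\}$, define $c \colon [\kappa]^2 \to 2$ by $c(\{\alpha, \beta\}) = 0$ iff $p_\alpha$ and $p_\beta$ are compatible in $\mathbb{I}^2_\gamma$. The weakly-compact partition relation $\kappa \to (\kappa)^2_2$ yields a homogeneous $H \subseteq \kappa$ of size $\kappa$; homogeneity in color $1$ would produce a $\kappa$-antichain, contradicting the $\kappa$-c.c., so $H$ must be homogeneous in color $0$ and therefore witnesses pairwise compatibility.

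The main obstacle I anticipate is the limit step at $\cf(\gamma) < \mu$: here the $<\mu$-sized supports can themselves be cofinal in $\gamma$, so the $\Delta$-system reduction to an earlier stage is not automatic. Handling this requires a fusion-style application of the $<\mu$-strategic closure strategy from Lemma \ref{basic_properties_of_p_lem}, exploiting that each tail factor $\dot{\mathbb{J}}^2_{\gamma''}$ is forced to be $<\mu$-closed, so that arbitrary conditions can be approximated by ones with sufficiently bounded supports for the $\Delta$-system argument to apply.
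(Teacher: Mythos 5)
Your proposal has the right base case and a correct (if not strictly necessary) observation that weak compactness upgrades $\kappa$-c.c.\ to $\kappa$-Knaster via $\kappa \to (\kappa)^2_2$, but the core of the argument is missing, and the decomposition you propose doesn't work.

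The successor step is where all the work lives, and you wave it away as ``the standard Baumgartner $\Delta$-system argument.'' That argument establishes ccc for the specializing forcing on an $\aleph_1$-Aronszajn tree; it does not transfer to $\kappa$-Aronszajn trees for $\kappa > \aleph_1$, which is why Laver--Shelah needed weak compactness in the first place. In the paper, the chain condition is proved \emph{globally} for the whole iteration at once, via an elementary submodel $M \prec H(\theta)$, a $<\kappa$-complete normal filter $\mathcal{F}$ with $\Pi^1_1$-reflection, and the notion of $\alpha$-separating witnesses (Claim~\ref{separation_claim}); one cannot isolate a single iterand $\dot{\mathbb{J}}^2_{\bar\gamma}$ and prove it $\kappa$-c.c.\ in $V^{\mathbb{I}^2_{\bar\gamma}}$ by a local $\Delta$-system. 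Relatedly, your limit case for $\cf(\gamma) < \mu$ does not reduce to bounded supports: with $<\mu$-supports, $\kappa$ conditions can all have support cofinal in $\gamma$, and $<\mu$-strategic closure does not let you ``approximate by conditions with bounded support'' — the support of a condition is fixed, not something a fusion argument can shrink. In the paper this case is handled together with the successor case by the same submodel machinery.

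More seriously, your sketch never touches the coordinate $p(0) \in \mathbb{P}(\mu, <\kappa)$, and that is exactly where the paper's novelty lies. In the inductive construction inside Claim~\ref{separation_claim}, at limit stages $\nu < \mu$ one must find lower bounds $q^L_\nu, q^R_\nu$ whose restrictions to $M_\alpha$ agree, and this forces one to place \emph{both} threads $t^L_\nu$ and $t^R_\nu$ into the club sets $q^L_\nu(0)(\alpha)$ and $q^R_\nu(0)(\alpha)$. This is precisely where the ``two-ness'' of $\square_{\mu,2}$ is exploited; with $\square_\mu$ (one club per level) the construction would break, consistently with the Shelah--Stanley incompatibility result cited in the introduction. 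A proof of this lemma that never engages with the $\mathbb{P}(\mu,<\kappa)$ coordinate cannot be correct, because the whole point of the paper is that a genuine obstruction appears there and $\square_{\mu,2}$ is exactly what circumvents it.
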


\begin{proof}
	By induction on $\gamma$. For the base case, we know $\mathbb{I}^{2}_{1} \simeq 
	\mathbb{P}\!\paren{\mu, < \kappa}$ is $\kappa$-Knaster by Lemma \ref{basic_properties_of_p_lem}. So suppose $\gamma \leq \delta$
	and each $\mathbb{I}^{2}_{\gamma'}$ is $\kappa$-Knaster for all $\gamma' < \gamma$. 
	We seek to show that $\mathbb{I}^{2}_{\gamma}$ is also $\kappa$-Knaster. 

	If $\gamma$ is a limit ordinal and $\mu \leq \cf{\gamma} \neq \kappa$
	this is immediate since any subset of $\mathbb{I}^{2}_{\gamma}$ of cardinality 
	$\kappa$  may be refined 
	to a subset of $\mathbb{I}^2_{\gamma'}$ of cardinality $\kappa$ for some $\gamma' < \gamma$. 

	If $\gamma$ is a limit ordinal with $\cf{\gamma} = \kappa$ this follows from a
	$\Delta$-system argument. 

	Thus suppose that either $\gamma$ is a limit ordinal with 
	$\cf{\gamma} < \mu$ or $\gamma = \bar{\gamma} + 1$ for 
	some ordinal $\bar{\gamma}$. Fix $M$ as in the hypothesis of the theorem, and in 
	either case fix an increasing sequence 
	$\left\{ \gamma_i \colon i < \cf{\gamma} \right\}$  in $M$ which is cofinal in $\gamma$ (if 
	$\gamma = \bar{\gamma} + 1$ is a successor ordinal we say its cofinality is $1$ and we let 
	$\gamma_0 = \bar{\gamma}$, so in this case $\left\{ \gamma_i \colon i < 1 \right\}$ is cofinal in $\gamma$).

	Let $R$ be a subset of $V_\kappa$ which encodes both $M$ and $\phi$ (where $\phi$ is the bijection from 
	the hypothesis of Theorem \ref{main_theorem}). 
	Fix a $< \kappa$-complete normal filter $\mathcal{F}$ on $\kappa$ which extends the club 
	filter and satisfies 
	\begin{align*}
		\left\{ \alpha < \kappa \colon \paren{V_{\alpha}, \in, R \cap V_\alpha} \models 
	\psi \right\} \in \mathcal{F}
	\end{align*}
	for each formula $\psi$ which is $\Pi^1_{1}$ over $V_\kappa$. 
For all $\alpha < \kappa$ set 
$M_\alpha = \phi `` \alpha$.

\begin{claim}[\cite{laver_shelah}, \cite{golshani_hayut}]
	Assume that $\bar{\gamma} \in \gamma \cap M$, and for all 
	$\bar{\bar{\gamma}} \in \bar{\gamma} \cap M$ we have that $\mathbb{I}^2_{\bar{\bar{\gamma}}}$ is $\kappa$-Knaster and 
	$\dot{T}_{\bar{\bar{\gamma}}}$ is an $\mathbb{I}^2_{\bar{\bar{\gamma}}} \ast \dot{\mathbb{I}}^1_{\bar{\bar{\gamma}}}$-name 
	for a $\kappa$-Aronszajn tree. Then there exists $X = X_{\bar{\gamma}} \in \mathcal{F}$ such that for all 
	$\alpha \in X$ and $\bar{\bar{\gamma}} \in \bar{\gamma} \cap M_{\alpha}$: 
	\begin{enumerate}
		\item $\alpha$ is inaccessible. 
		\item $M_{\alpha} \cap \kappa = \alpha$. 
		\item $M_{\alpha}^{< \alpha} \subseteq M_\alpha$. 
		\item $\mathbb{I}^{2}_{\bar{\bar{\gamma}}} \cap M_\alpha$ is a regular subposet of 
			$\mathbb{I}^2_{\bar{\bar{\gamma}}} \cap M$
			and is $\alpha$-c.c. 
		\item $\mathbb{I}^1_{\bar{\bar{\gamma}}} \cap M_{\alpha}$ is equivalent to an
			$\mathbb{I}^{2}_{\bar{\bar{\gamma}}} \cap M_\alpha$-name. 
		\item $(\mathbb{I}^2_{\bar{\bar{\gamma}}} \ast \dot{\mathbb{I}}^1_{\bar{\bar{\gamma}}}) \cap M_\alpha$ is a 
			regular subposet of 
			$( \mathbb{I}^2_{\bar{\bar{\gamma}}} \ast \dot{\mathbb{I}}^1_{\bar{\bar{\gamma}}}) 
			\cap M$. 
		\item $(\mathbb{I}^2_{\bar{\bar{\gamma}}} \ast \dot{\mathbb{I}}^1_{\bar{\bar{\gamma}}}) \cap 
			M_\alpha$ forces that 
			$T_{\bar{\bar{\gamma}}} \cap (\alpha \times \mu)$ is an $\alpha$-Aronszajn tree. 
	\end{enumerate}
\end{claim}

\begin{proof}
	Let $X = X_{\bar{\gamma}}$ be the set of all $\alpha < \kappa$ that satisfy these requirements for all 
	$\bar{\bar{\gamma}} \in \bar{\gamma} \cap M_\alpha$. 
	The claim follows immediately from a $\Pi^{1}_{1}$ reflection argument together with the fact that 
	$\mathcal{F}$ extends the club filter. In (7) we make use of the fact that 
	$(\mathbb{I}^2_{\bar{\bar{\gamma}}} \ast \dot{\mathbb{I}}^1_{\bar{\bar{\gamma}}}) \cap M$
	forces that $T_{\bar{\bar{\gamma}}}$ is a $\kappa$-Aronszajn tree, which follows from observing 
	that $(\mathbb{I}^2_{\bar{\bar{\gamma}}} \ast \dot{\mathbb{I}}^1_{\bar{\bar{\gamma}}}) \cap M$
	is a regular subposet of $\mathbb{I}^2_{\bar{\bar{\gamma}}} \ast \dot{\mathbb{I}}^1_{\bar{\bar{\gamma}}}$
	(this is itself a consequence of the fact that $\mathbb{I}^2_{\bar{\bar{\gamma}}} \ast \dot{\mathbb{I}}^1_{\bar{\bar{\gamma}}}$
	has the $\kappa$-c.c., by the inductive hypothesis). 
\end{proof}

\begin{defn}[\cite{laver_shelah}]
	A condition $p \in \mathbb{I}^2_\gamma$ is said to be \emph{determined} if there is in 
	$V$ a sequence $\langle x_{\xi} \colon 1 \leq \xi < \gamma \rangle$ such that for all 
	$1 \leq \xi < \gamma$, $p \restriction \xi \forces_{\mathbb{I}^2_{\xi}} p\paren{\xi} = \check{x}_{\xi}$. 
\end{defn}

As in \cite{laver_shelah}, we may easily observe that the set of determined conditions is dense in 
$\mathbb{I}^2_{\gamma}$. 

\begin{defn}[\cite{golshani_hayut}]
	Suppose that $p \in \mathbb{I}^{2}_{\gamma} \cap M$ is some condition and $\alpha < \kappa$. 
	Write $p = \langle p(\xi) \colon \xi < \gamma \rangle$. Then $p \restriction M_\alpha$ denotes the 
	condition $\langle p'(\xi) \colon \xi < \gamma \rangle$, where $p'(\xi)$ is the trivial condition if 
	$\xi \notin M_\alpha$ and $p'(\xi) = p(\xi) \cap M_\alpha$ otherwise. 
	We say that $p$ is \emph{$\alpha$-compatible} if 
	$p \restriction M_\alpha$ forces that $p$ is a determined condition in 
	$(\mathbb{I}^2_\gamma \cap M) / (G_{\mathbb{I}^2_\gamma} \cap M_\alpha)$. 
\end{defn}

\begin{claim}
	Let $X = X_{\bar{\gamma}}$ be as in the previous claim. Then for every $\alpha \in X$, 
	$\bar{\bar{\gamma}} \in (\bar{\gamma} + 1) \cap M_\alpha$, 
	$\bar{p} \in \mathbb{I}^2_\gamma \cap 
	M_\alpha$, $\alpha$-compatible $p^L, p^R \in \mathbb{I}^2_{\gamma} \cap M$ with 
	$\bar{p} = p^L \restriction M_\alpha = p^R \restriction M_\alpha$, 
	and every pair $(\dot{x}^L, \dot{x}^R)$ of 
	$(\mathbb{I}^2_{\bar{\bar{\gamma}}} \cap M) \ast (\mathbb{I}^1_{\bar{\bar{\gamma}}} \cap M_\alpha)$-names for 
	nodes in $T_{\bar{\bar{\gamma}}}$ above level $\alpha$,  
	there are 
	$\alpha$-compatible conditions $p^L_{*}, p^R_{*} \in \mathbb{I}^2_{\gamma} \cap M$, 
	$\bar{p}_* \in \mathbb{I}^2_{\gamma} \cap M_\alpha$, and a sequence 
	\begin{align*}
		\langle r_{\eta}, \xi_{\eta}, \check{x}^L_{\eta}, 
		\check{x}^R_{\eta} \colon \eta < \vartheta \rangle
	\end{align*}
	(for some $\vartheta < \mu$) in $M_\alpha$ such that: 
	\begin{enumerate}[(a)]
		\item $p^L_* \leq p^L$, $p^R_* \leq p^R$ and 
			$\bar{p}_* = p^L_* \restriction M_\alpha = p^R_* \restriction M_\alpha$. 
		\item For all $\eta < \vartheta$ $\bar{p}_* \forces_{\mathbb{I}^2_{\gamma} \cap M_\alpha} r_{\eta} \in 
			\dot{\mathbb{I}}^1_\gamma \cap M_\alpha$. 
		\item For all $\eta < \vartheta$, $\xi_{\eta} < \alpha$ and $x^L_{\eta}$, $x^R_{\eta}$ are elements of 
			$\left\{ \xi_{\eta} \right\} \times \mu$ with 
			$x^L_{\eta} \neq x^R_{\eta}$. 
		\item $(p^L_* \restriction \bar{\bar{\gamma}}, r_{\eta} \restriction \bar{\bar{\gamma}}) \forces 
			\check{x}^L_{\eta} \leq \dot{x}^L$ and 
			$(p^R_* \restriction \bar{\bar{\gamma}}, r_{\eta} \restriction \bar{\bar{\gamma}}) \forces 
			\check{x}^R_{\eta} \leq \dot{x}^R$. 
		\item $\bar{p}_{*} \forces_{\mathbb{I}^2_{\gamma} \cap M_\alpha} 
			\left\{ \dot{r}_{\eta} \colon \eta < \vartheta \right\}$ is a maximal antichain in 
			$\dot{\mathbb{I}}^1_\gamma$. 
	\end{enumerate}
	\label{separation_claim}
\end{claim}

	\begin{proof}
		Suppose $\alpha \in X$, 
		$\bar{\bar{\gamma}} \in (\bar{\gamma} + 1) \cap M_\alpha$, and 
		fix names $\dot{x}^L$, $\dot{x}^R$ for nodes in $T_{\bar{\bar{\gamma}}}$ of level 
		$\geq \alpha$ and conditions $\bar{p}$, $p^L$, $p^R$ as in the 
		statement of the claim. It follows from the choice of 
		$\alpha$ that for any $(\mathbb{I}^2_{\bar{\bar{\gamma}}} \cap M) \ast 
		(\dot{\mathbb{I}}^1_{\bar{\bar{\gamma}}} \cap M_\alpha)$-generic $G$ 
		the branches in $T_{\bar{\bar{\gamma}}} \restriction \alpha$ below 
		$\dot{x}^L$, $\dot{x}^R$ are not in 
		$V[G \cap (\mathbb{I}^2_{\bar{\bar{\gamma}}} \ast \mathbb{I}^1_{\bar{\bar{\gamma}}}) \cap M_\alpha]$. 
	
\begin{subclaim}
	For any pair $(s^L, t)$, $(s^R, t)$ of conditions in 
	$(\mathbb{I}^2_{\gamma} \cap M) \ast 
	(\dot{\mathbb{I}}^1_{\gamma} \cap M_\alpha)$ such that $s^L$, $s^R$ are $\alpha$-compatible 
	and $s^L \restriction M_\alpha = s^R \restriction M_\alpha$, 
	there is another pair $(q^L, r)$, $(q^R, r)$ of conditions in 
	$(\mathbb{I}^2_{\gamma} \cap M) \ast 
	(\dot{\mathbb{I}}^1_{\gamma} \cap M_\alpha)$ such that: 
	\begin{itemize}	
		\item $(q^L, r) \leq (s^L, t)$ and $(q^R, r) \leq (s^R, t)$
		\item $(q^L \restriction \bar{\bar{\gamma}}, r \restriction \bar{\bar{\gamma}})$, $(q^R \restriction \bar{\bar{\gamma}}, 
	r \restriction \bar{\bar{\gamma}})$ force incompatible values for the branches below 
	$\dot{x}^L$ and $\dot{x}^R$
		\item $q^L$, $q^R$ are $\alpha$-compatible 
		\item $q^L \restriction M_\alpha = q^R \restriction M_\alpha$. 
\end{itemize}
\end{subclaim}
\begin{proof}
	This is done exactly as in \cite{golshani_hayut}. We give the proof for the convenience 
	of the reader. Suppose the opposite for the sake of a contradiction, and consider 
	pairs $(s^L, t)$, $(s^R, t)$ witnessing the negation. 
	Let $H$ be $(\mathbb{I}^2_{\bar{\bar{\gamma}}} \ast \dot{\mathbb{I}}^1_{\bar{\bar{\gamma}}})
	\cap M_\alpha$-generic with $(s^L \restriction \bar{\bar{\gamma}}) \restriction M_\alpha = 
	(s^R \restriction \bar{\bar{\gamma}}) \restriction M_\alpha \in H$ and $J_i$ be 
	$(\mathbb{I}^2_{\bar{\bar{\gamma}}} \cap M) / (\mathbb{I}^2_{\bar{\bar{\gamma}}} \cap H)$-mutually 
	generic with $(s^i \restriction \bar{\bar{\gamma}}, t) \in J_i$ (for $i \in \curly{L, R}$). 
	
	If $K_i$ is any 
	$[(\mathbb{I}^2_{\bar{\bar{\gamma}}} \ast \dot{\mathbb{I}}^1_{\bar{\bar{\gamma}}}) \cap M]/ (H \ast J_i)$-
	generic ($i \in \curly{L, R}$) then in $V[H][J_i][K_i]$ there is a branch $b^i$ in the tree 
	$T_{\bar{\bar{\gamma}}} \cap \paren{\alpha \times \mu}$ consisting of nodes which lie below 
	$x^i$. Moreover, by condition (2) of Definition \ref{suitability_defn}, we have 
	$b^i \in V[H][J_i]$ (note, however, that $V[H][J_i]$ may not recognize that all nodes in 
	$b^i$ are below $x^i$, or even that $T_{\bar{\bar{\gamma}}}$ itself is a tree). 
	Nonetheless, by condition (1) of Definition \ref{suitability_defn} 
	there exists $\mu_0^i < \mu$ and a collection 
	$\{ \check{b}^i_{\xi} \colon \xi < \mu_0^i \}$ of names for elements of 
	$V[H][J_i]$ which are cofinal branches through $T_{\bar{\bar{\gamma}}} \cap 
	(\alpha \times \mu)$ such that in $V[H][J_i]$ the following holds: 
	\begin{align*}
		\forces_{ [(\mathbb{I}^2_{\bar{\bar{\gamma}}} 
		\ast \dot{\mathbb{I}}^1_{\bar{\bar{\gamma}}}) \cap M]/ (H \ast J_i)}
		\paren{\exists \xi < \mu_0^i}\paren{ \check{b}_{\xi}^i = \dot{b}^i}
	\end{align*}
	where $\dot{b}^i$ is the canonical name for the branch $b^i$ described above. 

	Moreover, by the assumption of the subclaim, there must exist 
	$\xi^{L} < \mu_0^L$ and $\xi^{R} < \mu_0^{R}$ such that 
	$b^{L}_{\xi^L} = b^{R}_{\xi^R}$. Denoting this common value by $b$, we have
	\begin{align*}
		b \in V[H][J_L] \cap V[H][J_R]
	\end{align*}
	and since $J_L$, $J_R$ were chosen to be mutually generic we have 
	$b \in V[H]$. But this is a contradiction since 
	$(\mathbb{I}^2_{\bar{\bar{\gamma}}} \ast \dot{\mathbb{I}}^1_{\bar{\bar{\gamma}}}) \cap M_\alpha$
	forces $T_{\bar{\bar{\gamma}}} \cap \paren{\alpha \times \mu}$ to be 
	an $\alpha$-Aronszajn tree. 
\end{proof}
Invoking this claim, we may find a pair of conditions $(p^L_0, r_0)$, 
$(p_0^R, r_0)$ in $ (\mathbb{I}^2_\gamma \cap M) 
\ast (\dot{\mathbb{I}}^1_\gamma \cap M_\alpha)$ with $p_0^{L} \leq p^L$, 
$p^R_0 \leq p^R$, and $p_0^L \restriction M_\alpha = p_0^R \restriction M_\alpha$ 
together with $\xi_0 < \alpha$ and elements $x_0^L$, $x_0^R$ in $\left\{ \xi_0 \right\} \times 
\mu$ such that 
\begin{align*}
	(p_0^L \restriction \bar{\bar{\gamma}}, r_0 \restriction \bar{\bar{\gamma}}) &\forces 
	\check{x}_0^L \leq \dot{x}^L \\
	(p_0^R \restriction \bar{\bar{\gamma}}, r_0 \restriction \bar{\bar{\gamma}}) &\forces 
	\check{x}_0^R \leq \dot{x}^R
\end{align*}
Furthermore, we may assume that if we let $t_0^L$ be the unique element of 
$p_0^L\!\paren{0}\!\paren{\alpha}$ and $t_0^R$ be the unique element of 
$p_0^R\!\paren{0}\!\paren{\alpha}$ then 
\begin{align*}
	\paren{p^L_0 \restriction M_\alpha}\!\paren{0} \ast \check{t}^L_0, \\
	\paren{p^R_0 \restriction M_\alpha}\!\paren{0} \ast \check{t}^R_0
\end{align*}
are flat conditions in $\mathbb{P}\!\paren{\mu, < \alpha} \ast \dot{\mathbb{T}}_{\alpha}$, 
where $\dot{\mathbb{T}}_{\alpha} = \check{\mathbb{T}}_{\bigcup{\dot{G}}_{\mathbb{P}(\mu, < \alpha)}}$. 

Proceeding inductively, suppose $\nu < \mu$ and we have defined the pairs 
$(p^L_{\eta}, \dot{r}_{\eta})$, $(p^R_{\eta}, \dot{r}_{\eta})$ in $(\mathbb{I}^2_{\gamma} \cap M) 
 \ast (\dot{\mathbb{I}}^1_\gamma \cap M_\alpha)$, 
$\bar{p}_{\eta}$ in $\mathbb{I}^2_{\gamma} \cap M_\alpha$, and $\check{t}^{L}_{\eta}$, 
$\check{t}^{R}_{\eta}$ in $\dot{\mathbb{T}}_{\alpha} \cap M$ together with 
$\xi_{\eta}$ and $x^L_{\eta}$, $x^{R}_{\eta} \in \curly{\xi_{\eta}} \times \mu$, 
such that: 
\begin{itemize}
	\item The sequences $\langle p^{L}_{\eta} \colon \eta < \nu \rangle$ and 
		$\langle p^{R}_{\eta} \colon \eta < \nu \rangle$ are decreasing and 
		for each $\eta$ $p^L_{\eta}$ and $p^{R}_{\eta}$ are $\alpha$-compatible. 
	\item $\bar{p}_{\eta} = p^L_{\eta} \restriction M_\alpha = p^R_{\eta} \restriction M_\alpha$. 
	\item $\bar{p}_{\eta} \forces_{\mathbb{I}^2_{\gamma} \cap M_\alpha} \dot{r}_{\eta} \in 
		\dot{\mathbb{I}}^1_{\gamma} \cap M_\alpha$. 
	\item For $\eta_0 < \eta_1 < \nu$, $\bar{p}_{\eta_1} \forces_{\mathbb{I}^2_{\delta} \cap M_\alpha}$
		$\dot{r}_{\eta_0}$, $\dot{r}_{\eta_1}$ are incompatible. 
	\item $\xi_n < \alpha$, $x^{L}_{\eta}$, $x^{R}_{\eta} \in \curly{\xi_\eta} \times \mu$
		and $x^{L}_{\eta} \neq x^{R}_{\eta}$. 
	\item $(p^{L}_{\eta} \restriction \bar{\bar{\gamma}}, \dot{r}_{\eta} \restriction \bar{\bar{\gamma}}) \forces 
		\check{x}^{L}_{\eta} \leq \dot{x}^L$. 
	\item $(p^{R}_{\eta} \restriction \bar{\bar{\gamma}}, \dot{r}_{\eta} \restriction \bar{\bar{\gamma}}) \forces 
		\check{x}^{R}_{\eta} \leq \dot{x}^R$. 
	\item $t^{L}_{\eta}$ is the unique element of $p^{L}_{\eta}\!\paren{0}\!\paren{\alpha}$. 
	\item $t^{R}_{\eta}$ is the unique element of $p^{R}_{\eta}\!\paren{0}\!\paren{\alpha}$. 
	\item $(p^{L}_{\eta} \restriction M_\alpha)\!\paren{0} \ast 
		\check{t}^{L}_{\eta}$, $(p^{R}_{\eta} \restriction M_\alpha)\!\paren{0} 
		\ast \check{t}^{R}_{\eta}$ are flat conditions in 
		$\mathbb{P}(\mu, < \alpha) \ast \dot{\mathbb{T}}_{\alpha}$, where
		$\dot{\mathbb{T}}_{\alpha} = \check{\mathbb{T}}_{\bigcup \dot{G}_{\mathbb{P}(\mu, < \alpha)}}$. 
\end{itemize}
If $\nu$ is a successor ordinal let $q_{\nu}^{L} = p^{L}_{\nu - 1}$, $q^{R}_{\nu} = p^{R}_{\nu - 1}$. 
Otherwise, let 
\begin{align*}
	t^{L}_{\nu} &= \bigcup_{\eta < \nu}{t^{L}_{\eta}} \cup \curly{ \sup \bigcup_{\eta < \nu}{t^{L}_{\eta}}}\\
	t^{R}_{\nu} &= \bigcup_{\eta < \nu}{t^{R}_{\eta}} \cup \curly{ \sup \bigcup_{\eta < \nu}{t^{R}_{\eta}}}
\end{align*}
and let $q^{L}_{\nu}$, $q^{R}_{\nu}$ be lower bounds of 
$\left\{ p^{L}_{\eta} \colon \eta < \nu \right\}$, $\left\{ p^{R}_{\eta} \colon \eta < \nu \right\}$
such that \emph{both} $t^{L}_{\nu}$, $t^{R}_{\nu}$ appear on the (approximations to) square sequences 
$q^{L}_{\nu}\!\paren{0}$, $q^{R}_{\nu}\!\paren{0}$. These lower bounds may be seen to exist by an argument 
similar to that used to prove strategic closure in Lemma \ref{basic_properties_of_p_lem}. Namely, each initial 
segment of $t^{L}_{\eta}$, $t^{R}_{\eta}$ of limit order type has already been placed on $p^{L}_{\eta}(0)$, 
$p^{R}_{\eta}(0)$ for some $\eta < \nu$, and therefore we may place $t^{L}_{\nu}$, $t^{R}_{\nu}$ on 
$q^{L}_{\nu}(0)$, $q^{R}_{\nu}(0)$ without any danger of violating coherence. 
Observe that this is the part of the argument where we exploit the ``two-ness'' of the principle 
$\square_{\kappa, 2}$ (and hence of the poset used to force it). Namely, we seek to ensure that 
$p^{L}_{*}$, $p^{R}_{*}$ agree on $M_\alpha$, and so must put both threads on both conditions. 
Finally, note that in either case ($\nu$ successor or limit) we have 
$q_{\nu}^{L}, q_{\nu}^{R}$ are $\alpha$-compatible conditions in  $\mathbb{I}^{2}_{\gamma} \cap M$ 
and $q^{L}_{\nu} \restriction M_\alpha = q^{R}_{\nu} \restriction M_\alpha$. 
Let $\bar{q}_{\nu} = q^L_{\nu} \restriction M_\alpha = q^R_{\nu} \restriction M_\alpha$. 
If 
\begin{align*}
	\bar{q}_{\nu} \forces_{\mathbb{I}^{2}_{\gamma} \cap M_\alpha} \curly{\dot{r}_\eta \colon \eta < \nu}
	\text{ is a maximal antichain}
\end{align*}
we halt the construction and set $p^{L}_{\nu} = q_{\nu}^{L}$, $p^{R}_{\nu} = q^{R}_{\nu}$. 
Otherwise proceed exactly as when obtaining $r_{0}$, except now working below $s_{\nu}$. Namely, 
find a condition $s_{\nu}$ forced to be incompatible with 
every $r_{\eta}$ ($\eta < \nu$) and choose $(p^L_{\nu}, r_{\nu})$, $(p^{R}_{\nu}, r_\nu)$, 
$\bar{p}_{\nu}$, $\xi_\nu < \alpha$, and $x^L_{\nu}, x^{R}_{\nu} \in \curly{\xi_\nu}
\times \mu$ such that: 
\begin{itemize}
	\item $(p^L_\nu, r_\nu), (p^R_\nu, r_\nu) \in (\mathbb{I}^2_\gamma \cap M) 
		\ast ( \dot{\mathbb{I}}^1_\gamma \cap M_\alpha)$. 
	\item $p^{L}_{\nu}$, $p^R_{\nu}$ are $\alpha$-compatible. 
	\item $(p^L_\nu, r_\nu) \leq (q^L_\nu, s_\nu)$ and $(p^R_\nu, r_\nu) \leq (q^R_\nu, s_\nu)$. 
	\item $\bar{p}_\nu = p^L_\nu \restriction M_\alpha = p^R_\nu \restriction M_\alpha$. 
	\item $(p^L_\nu \restriction \bar{\bar{\gamma}}, \dot{r}_\nu \restriction \bar{\bar{\gamma}}) \forces 
		\check{x}^L_\nu \leq \dot{x}^L$. 
	\item $(p^R_{\nu} \restriction \bar{\bar{\gamma}}, \dot{r}_{\nu} \restriction \bar{\bar{\gamma}}) \forces 
		\check{x}^R_\nu \leq \dot{x}^R$. 
	\item $t_{\nu}^L$ is the unique element of $p_{\nu}^L(0)(\alpha)$. 
	\item $t_{\nu}^R$ is the unique element of $p_{\nu}^R(0)(\alpha)$. 
	\item $(p_{\nu}^L \restriction M_\alpha)(0) \ast \check{t}^L_\nu$, 
		$(p_{\nu}^{R} \restriction M_\alpha)(0) \ast \check{t}^R_\nu$ are flat conditions 
		in $\mathbb{P}(\mu, < \alpha) \ast \dot{\mathbb{T}}_\alpha$, where 
		$\dot{\mathbb{T}}_\alpha = \check{\mathbb{T}}_{\bigcup \dot{G}_{\mathbb{P}(\mu, < \alpha)}}$
\end{itemize}
By Lemma \ref{chain_condition_of_I1_lem} this process terminates after 
$< \mu$ many steps. At its completion we get an ordinal 
$\vartheta < \mu$, descending sequences $\langle p^{L}_{\eta} \colon \eta \leq \vartheta \rangle$
and $\langle p^{R}_{\eta} \colon \eta \leq \vartheta \rangle$ of conditions in 
$\mathbb{I}^2_{\gamma} \cap M$, as well 
as sequences $\langle \bar{p}_{\eta} \colon \eta < \vartheta \rangle$, 
$\langle r_{\eta} \colon \eta < \vartheta \rangle$, and $\langle (\xi_{\eta}, \check{t}^L_{\eta}, \check{t}^R_{\eta}, 
\check{x}^L_{\eta}, \check{x}^R_{\eta}) \colon \eta < \vartheta \rangle$ such that: 

\begin{itemize}
	\item $(p^L_\eta, r_\eta), (p^R_\eta, r_\eta) \in (\mathbb{I}^2_\gamma \cap M) 
		\ast ( \dot{\mathbb{I}}^1_\gamma \cap M_\alpha)$. 
	\item $p^{L}_{\eta}$, $p^{R}_{\eta}$ are $\alpha$-compatible. 
	\item $\bar{p}_\eta = p^L_\eta \restriction M_\alpha = p^R_\nu \restriction M_\alpha$. 
	\item $x^{L}_{\eta}, x^{R}_{\eta} \in \curly{\xi_{\eta}} \times \mu$. 
	\item $(p^L_\eta \restriction \bar{\bar{\gamma}}, \dot{r}_\eta \restriction \bar{\bar{\gamma}}) \forces 
		\check{x}^L_\eta \leq \dot{x}^L$. 
	\item $(p^R_{\eta} \restriction \bar{\bar{\gamma}}, \dot{r}_{\eta} \restriction \bar{\bar{\gamma}}) \forces 
		\check{x}^R_{\eta} \leq \dot{x}^R$. 
	\item $t_{\eta}^L$ is the unique element of $p_{\eta}^L(0)(\alpha)$. 
	\item $t_{\eta}^R$ is the unique element of $p_{\eta}^R(0)(\alpha)$. 
	\item $(p_{\eta}^L \restriction M_\alpha)(0) \ast \check{t}^L_\eta$, 
		$(p_{\eta}^{R} \restriction M_\alpha)(0) \ast \check{t}^R_\eta$ are flat conditions 
		in $\mathbb{P}(\mu, < \alpha) \ast \dot{\mathbb{T}}_\alpha$, where 
		$\dot{\mathbb{T}}_\alpha = \check{\mathbb{T}}_{\bigcup \dot{G}_{\mathbb{P}(\mu, < \alpha)}}$
\end{itemize}
Finally, set $p^L_{*} = p^L_{\vartheta}$, $p^R_{*} = p^R_{\vartheta}$. 
Then this 
pair $(p^L_{*}, p^R_{*})$ together with $\langle r_{\eta}, \xi_{\eta}, \check{x}^L_{\eta}, \check{x}^R_{\eta} 
\colon \eta < \vartheta \rangle$ are as desired. 
	\end{proof} 
	Following \cite{golshani_hayut}, let
	us call the sequence $\langle (r_{\eta}, \xi_{\eta}, \check{x}^L_{\eta}, \check{x}^R_{\eta}) 
	\colon \eta < \vartheta \rangle$
	an $\alpha$-separating witness for the nodes $\dot{x}^{L}$, $\dot{x}^R$ relative to $p_{*}^L$, $p_{*}^R$. 
	We now continue with the proof of Lemma \ref{chain_condition_lem}: 
	\begin{claim}
		There is $X \in \mathcal{F}$ such that for every condition 
		$p \in \mathbb{I}^2_\gamma \cap M$ and $\alpha \in X$ there are conditions 
		$p^L, p^R \leq p$ (both in $M$) such that $p^L \restriction M_\alpha = p^R \restriction 
		M_\alpha$ and for every $\gamma' \in \gamma \cap M_\alpha$, any pair of elements above 
		$\alpha$ in $\dom{(p^L(\gamma'))} \times \dom{(p^R(\gamma'))}$ has an 
		$\alpha$-separating witness in $M_\alpha$ relative to $p^L \restriction \gamma$, 
		$p^R \restriction \gamma$. 
		\label{separating_pair_claim}
	\end{claim}
	We call such a pair $(p^L, p^R)$ an \emph{$\alpha$-separating pair}. 
	\begin{proof}
		Recall that we chose a sequence $\left\{ \gamma_i \colon i < \cf{\gamma} \right\}$ 
		cofinal in $\gamma$. For each $i < \cf{\gamma}$ let $X_{\gamma_i}$ be as in 
		Claim \ref{separation_claim}, and let $X = \bigcap_{i < \cf{\gamma}}{X_{\gamma_i}}$. This 
		$X$ suffices, as may be seen by applying Claim \ref{separation_claim} $\cf{\gamma}$ many times 
		and using the $< \mu$-strategic closure of $\mathbb{I}^2_{\gamma}$. 
	\end{proof}
	Returning to the proof of the $\kappa$-c.c., let $X$ be as in Claim \ref{separating_pair_claim} and 
	let $\langle p_\alpha \colon \alpha < \kappa \rangle \in M$ be a sequence of 
	conditions in $\mathbb{I}^2_\gamma$. For every $\alpha \in X$ we may extend 
	$p_\alpha$ to an $\alpha$-separating pair $(p_\alpha^L, p_\alpha^R) \in M$. 
	Let $s_\alpha \in M_\alpha$ be the list of separating witnesses and let 
	$\bar{p}_\alpha$ denote $p_\alpha^L \restriction M_\alpha = p_\alpha^R \restriction M_\alpha$. 

	The function $\alpha \mapsto (s_\alpha, \bar{p}_\alpha)$ is regressive, and so by normality of
	$\mathcal{F}$ there is a set $Y$ which is positive with respect to 
	this filter and a pair $(s^*, \bar{p}^*)$ such that for all $\alpha \in Y$ 
	$(s_\alpha, \bar{p}_\alpha) = (s^*, \bar{p}^*)$. By further thinning we may assume that for 
	every $\alpha_0, \alpha_1 \in Y$ with $\alpha_0 < \alpha_1$ we have 
	$p_{\alpha_0}^L, p_{\alpha_1}^R \in M_{\alpha_1}$. Similarly, we may assume without loss of 
	generality that 
	\begin{align*}
		\left\{ \supp{(p_\alpha^L)} \cup \supp{(p_\alpha^R)} \colon \alpha \in Y \right\}
	\end{align*}
	is a $\Delta$-system with root $R$. 

	We claim that for any $\alpha_0 < \alpha_1$ in $Y$, $p_{\alpha_0}$ is compatible with 
	$p_{\alpha_1}$, as witnessed by the condition $q$ given by 
	$q(\gamma') = p_{\alpha_0}^{L}(\gamma') \cup p_{\alpha_1}^{R}(\gamma')$ for every 
	$\gamma' < \gamma$. 

	We must show that $q$ so defined is a condition. Clearly $|\dom{(q)}| < \mu$ and 
	so it remains only to show that $q \restriction \gamma'$ forces that 
	$q(\gamma')$ is a condition in $\dot{\mathbb{J}}^{2}_{\gamma'}$ for all 
	$\gamma' < \gamma$. We proceed by induction on $\gamma'$. For 
	$\gamma' = 0$ $q(\gamma') \in \mathbb{P}(\mu, < \kappa)$, 
	since $p^L_{\alpha_0}(\gamma')$, $p^R_{\alpha_1}(\gamma')$ have identical intersection with 
	$M_{\alpha_0}$ and are disjoint above $\alpha_0$. 

	Now assume $\gamma' > 0$ and $q \restriction \gamma'$ is a condition. 
	Without loss of generality $\dot{T}_{\gamma'_0}$
	is an $\mathbb{I}^2_{\gamma'} \ast \dot{\mathbb{I}}^1_{\gamma'}$-name for a 
	$\kappa$-Aronszajn tree, as otherwise $\dot{\mathbb{J}}^2_{\gamma'}$ is a name for 
	the trivial forcing. We may also assume $\gamma' \in R$, since otherwise 
	$q(\gamma')$ is either $p^L_{\alpha_0}(\gamma')$ or $p_{\alpha_1}^R(\gamma')$. 

	In order to show that $q \restriction \gamma' \forces q(\gamma')$ is a condition
	we must show that if $\dot{x}^L, \dot{x}^R \in \dom{(q(\gamma'))}$ and 
	$q \restriction \gamma' \forces q(\gamma')(\dot{x}^L) = q(\gamma')(\dot{x}^R)$, then 
	\begin{align*}
		(q \restriction \gamma', \dot{1}_{\mathbb{I}^1_{\gamma'}}) \forces_{\mathbb{I}^2_{\gamma'} 
		\ast \dot{\mathbb{I}}^1_{\gamma'}} \dot{x}^L \perp \dot{x}^R
	\end{align*}
	Since $p_{\alpha_0}^L \restriction M_{\alpha_0} = \bar{p} = p^R_{\alpha_1} \restriction M_{\alpha_1}$, 
	we may assume without loss of generality that both $\dot{x}^L$, $\dot{x}^R$ are names for nodes 
	above level $\alpha_0$. Letting $s^* = \langle r_{\eta}, \xi_{\eta}, \check{x}^L_{\eta}, 
	\check{x}^R_{\eta} \colon \eta < \vartheta \rangle$ be our fixed separating witness, 
	we have: 
	\begin{align*}
		(p_{\alpha_0}^L \restriction \gamma', \dot{r}_\eta \restriction \gamma') &\forces 
		\check{x}^L_{\eta} \leq \dot{x}^L \\
		(p_{\alpha_1}^R \restriction \gamma', \dot{r}_\eta \restriction \gamma') &\forces 
		\check{x}^R_{\eta} \leq \dot{x}^R
	\end{align*}
	By the induction hypothesis $q \restriction \gamma'$ is a condition extending 
	both $p_{\alpha_0}^L \restriction \gamma'$ and $p_{\alpha_0}^R \restriction \gamma'$
	and so in particular, since $\check{x}^{L}_{\eta} \neq \check{x}^{R}_{\eta}$, for all $\eta < \vartheta$ we have 
	\begin{align*}
		(q \restriction \gamma', \dot{r}_{\eta} \restriction \gamma') \forces 
		\dot{x}^L \perp_{\dot{T}_{\gamma'}} \dot{x}^R
	\end{align*}
	Since $\left\{ \dot{r}_\eta \colon \eta < \vartheta \right\}$ is forced to be a maximal antichain, we have 
	\begin{align*}
		(q \restriction \gamma', \dot{1}_{\mathbb{I}^2_{\gamma'}}) \forces \dot{x}^L 
		\perp_{\dot{T}_{\gamma'}} \dot{x}^R
	\end{align*} 
	as desired. 
\end{proof} 

\begin{remark}
	It behooves us to observe that the proof of Lemma \ref{chain_condition_lem} actually gives us something 
	stronger--namely that for any $S \subseteq \delta$ such that 
	$\mathbb{I}^2_{\delta} \restriction S$ is a regular subposet of $\mathbb{I}^2_{\delta}$, and 
	for any $\mathbb{I}^2_{\delta} \restriction S$-generic $K$, the quotient 
	$\mathbb{I}^2_{\delta} / K$ is $\kappa$-Knaster. The proof of this is almost 
	identical to that of Lemma \ref{chain_condition_lem}, but we must observe that 
	every name $\dot{T}_{\gamma}$ for a $\kappa$-Aronszajn tree considered in the 
	iteration remains a name for a $\kappa$-Aronszajn tree in $V[K]$.
	This is true since any such tree is $\kappa$-Aronszajn (in fact, special)
	in $V[K][L]$, where $L$ is generic for $\mathbb{I}^2_{\delta} / K$. 
	\label{stronger_lem_remark}
\end{remark}

From Lemmas \ref{chain_condition_of_I1_lem} and \ref{chain_condition_lem} we have 

\begin{lem}
	For all $\gamma \leq \delta$ $\mathbb{I}^2_{\gamma} \ast \dot{\mathbb{I}}^1_\gamma$ 
	has the $\kappa$-c.c. 
	\label{second_chain_condition_lem}
\end{lem}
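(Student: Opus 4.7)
The plan is to derive this from the two preceding lemmas: Lemma~\ref{chain_condition_lem} gives that $\mathbb{I}^2_{\gamma}$ is $\kappa$-Knaster (in particular $\kappa$-c.c.), and Lemma~\ref{chain_condition_of_I1_lem} says $\mathbb{I}^2_{\gamma}$ forces $\dot{\mathbb{I}}^1_{\gamma}$ to be $\mu$-c.c., which is certainly $\kappa$-c.c.\ since $\mu < \kappa$. What remains is to invoke (or quickly reprove) the standard preservation fact that a $\kappa$-Knaster forcing followed by a $\kappa$-c.c.\ forcing yields a $\kappa$-c.c.\ two-step iteration.

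Concretely, suppose toward a contradiction that $\langle (p_\alpha, \dot{q}_\alpha) \colon \alpha < \kappa \rangle$ is an antichain in $\mathbb{I}^2_{\gamma} \ast \dot{\mathbb{I}}^1_\gamma$. Applying the $\kappa$-Knaster property of $\mathbb{I}^2_\gamma$ from Lemma~\ref{chain_condition_lem}, I pass to a subset $A \in [\kappa]^\kappa$ such that $\{p_\alpha \colon \alpha \in A\}$ is pairwise compatible. Let $\dot{S}$ be the canonical $\mathbb{I}^2_\gamma$-name for $\{\alpha \in A \colon p_\alpha \in \dot{G}\}$, so that $p_\alpha \forces \alpha \in \dot{S}$ for every $\alpha \in A$. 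For any two $\alpha, \beta \in \dot{S}^G$, a common extension of $p_\alpha$, $p_\beta$ in $G$ must force $\dot{q}_\alpha \perp \dot{q}_\beta$ (since the original pairs were incompatible); hence in $V[G]$ the set $\{\dot{q}_\alpha^G \colon \alpha \in \dot{S}^G\}$ is an antichain in $\dot{\mathbb{I}}^1_\gamma{}^G$. By Lemma~\ref{chain_condition_of_I1_lem} this antichain has size $< \mu < \kappa$, so $\forces_{\mathbb{I}^2_\gamma} |\dot{S}| < \kappa$, and since $\kappa$ is regular also $\forces \sup \dot{S} < \kappa$.

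Now I use the $\kappa$-c.c.\ of $\mathbb{I}^2_\gamma$ to bound the name $\sup \dot{S}$: by a routine antichain argument, any $\mathbb{I}^2_\gamma$-name for an ordinal forced to lie below $\kappa$ is in fact forced to lie below some fixed $\beta^* < \kappa$. Picking any $\alpha \in A$ with $\alpha > \beta^*$ yields $p_\alpha \forces \sup \dot{S} \geq \alpha > \beta^*$, a contradiction.

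I expect no real obstacle here: the two-step c.c.\ preservation is folklore, and both inputs (Knaster on the left, $\mu$-c.c.\ on the right) have already been proved above. The only mild point to note is that Knaster, rather than mere $\kappa$-c.c., is genuinely used, since it allows the reduction to a pairwise-compatible subfamily on the first coordinate before invoking the chain condition on the second coordinate inside the extension.
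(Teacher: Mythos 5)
Your proof is correct and fills in exactly the step the paper treats as immediate: the paper states this lemma with no proof, deriving it directly from Lemmas~\ref{chain_condition_of_I1_lem} and~\ref{chain_condition_lem} via the standard two-step c.c.\ preservation argument, which is what you have written out. One small inaccuracy in your closing remark: Knaster is not actually \emph{needed} here, only convenient. If $p_\alpha, p_\beta$ both lie in a generic filter $G$ then they are automatically compatible with a common extension in $G$, so the same reasoning (form the name $\dot S$ for $\{\alpha < \kappa \colon p_\alpha \in \dot G\}$, use $\mu$-c.c.\ in $V[G]$ to see $|\dot S^G| < \mu$, then use $\kappa$-c.c.\ of $\mathbb{I}^2_\gamma$ plus regularity of $\kappa$ to bound $\sup\dot S$) goes through without first refining $\{p_\alpha\}$ to a pairwise compatible family; $\kappa$-c.c.\ on the first factor already suffices for preservation under a single two-step iteration.
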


\begin{lem}
	Let $\mathbb{I} = \mathbb{I}^2_{\delta} \ast \dot{\mathbb{I}}^1_\delta$ be as 
	above and suppose that $G$ is $\mathbb{I}$-generic over $V$. Then: 
	\begin{enumerate}[(a)]
		\item $\mu$ remains a cardinal in $V[G]$, $(\mu^+)^{V[G]} = \kappa$, and 
			$(\mu^{++})^{V[G]} = (\kappa^+)^{V}$. 
		\item $V[G] \models 2^{\mu} \geq \delta$. 
		\item $V[G] \models \square_{\mu, 2}$. 
	\end{enumerate}
\end{lem}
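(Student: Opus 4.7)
The plan is to establish (a), (c), and (b) in that order, each following directly from lemmas already in place.

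For (a), the $<\mu$-strategic closure of $\mathbb{I}^2_\delta$ preserves all cardinals and cofinalities $\leq \mu$, and composing with the $\mu$-c.c.\ poset $\dot{\mathbb{I}}^1_\delta$ from Lemma \ref{chain_condition_of_I1_lem} preserves $\mu$ itself. For $(\mu^+)^{V[G]}=\kappa$, factor $\mathbb{I}^2_\delta = \mathbb{I}^2_1 \ast (\mathbb{I}^2_\delta / \mathbb{I}^2_1)$: the first factor $\mathbb{P}(\mu,<\kappa)$ collapses $\kappa$ to $\mu^+$ by Lemma \ref{basic_properties_of_p_lem}; the quotient is $\kappa$-Knaster in $V[\mathbb{I}^2_1]$ by Remark \ref{stronger_lem_remark} and so preserves $\kappa$; and the final $\mu$-c.c.\ factor preserves $\kappa = \mu^+$. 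The equality $(\mu^{++})^{V[G]} = (\kappa^+)^V$ is immediate from the $\kappa$-c.c.\ of the whole forcing (Lemma \ref{second_chain_condition_lem}).

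For (c), the $\square_{\mu,2}$-sequence $\vec{\mathcal{C}}$ added by the initial factor $\mathbb{I}^2_1$ still lies in $V[G]$. All three clauses of the definition of $\square_{\mu,2}$ are absolute, and by (a) we have $\kappa = (\mu^+)^{V[G]}$, so $\vec{\mathcal{C}}$ remains a $\square_{\mu,2}$-sequence indexed by the limit ordinals below $(\mu^+)^{V[G]}$.

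For (b), the plan is to exhibit $\delta$ many distinct subsets of $\mu$ in $V[G]$. For each $\gamma < \delta$ let $F_\gamma \colon \kappa \times \mu \to \mu$ be the generic specializing function added at stage $\gamma$, and set $G_\gamma = F_\gamma \restriction (\mu \times \mu)$, coded as a subset of $\mu$ via a fixed bijection $\mu \times \mu \times \mu \to \mu$. A density argument shows $G_\gamma \notin V[\mathbb{I}^2_\gamma]$: given any condition $p$ in $\mathbb{B}_{\mathbb{I}^1_\gamma}(\dot{T}_\gamma)$ and any candidate $F' \in V[\mathbb{I}^2_\gamma]$, pick $(\alpha,\beta) \in (\mu \times \mu) \setminus \dom{p}$ (possible since $|\dom{p}| < \mu$) and a value $c < \mu$ outside the range of $p$ and distinct from $F'(\alpha,\beta)$; then $p \cup \{\langle (\alpha,\beta), c\rangle\}$ is a legal extension forcing $F_\gamma(\alpha,\beta) \neq F'(\alpha,\beta)$, since the specializing constraint at $(\alpha,\beta)$ is vacuous because $c$ is a fresh value. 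Since $G_{\gamma'} \in V[\mathbb{I}^2_{\gamma'+1}] \subseteq V[\mathbb{I}^2_\gamma]$ for all $\gamma' < \gamma$, the sequence $\langle G_\gamma : \gamma < \delta \rangle$ is an injection from $\delta$ into $\mathcal{P}(\mu)^{V[G]}$, yielding $2^\mu \geq \delta$. The main obstacle is the $\kappa$-preservation step in (a) for the quotient $\mathbb{I}^2_\delta / \mathbb{I}^2_1$, which is exactly the strengthening recorded in Remark \ref{stronger_lem_remark}; once that is invoked the remaining parts fall out easily.
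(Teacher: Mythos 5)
The paper states this lemma without proof, evidently treating it as a routine consequence of the preceding closure and chain-condition lemmas, so there is no argument in the paper to compare against; your proof is correct and supplies exactly the expected argument. Two small remarks. In (a) the detour through Remark \ref{stronger_lem_remark} is unnecessary: the $\kappa$-Knasterness of $\mathbb{I}^2_\delta$ over $V$ (Lemma \ref{chain_condition_lem}) together with the $\kappa$-c.c.\ of $\mathbb{I}^2_\delta \ast \dot{\mathbb{I}}^1_\delta$ (Lemma \ref{second_chain_condition_lem}) already preserve $\kappa$ and $(\kappa^+)^V$, and the collapse of the interval $(\mu,\kappa)$ is accomplished at stage $1$, after which it persists in every further extension. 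In (b), to pass from exhibiting $\delta$ many pairwise distinct subsets of $\mu$ to the conclusion $2^\mu \geq \delta$ you should note explicitly that $\delta$ remains a cardinal in $V[G]$ (since $\delta \geq \kappa^+$ and $\mathbb{I}$ is $\kappa$-c.c.); otherwise one would only obtain $2^\mu \geq |\delta|^{V[G]}$. With that observation added, the density argument for freshness of $G_\gamma$ over $V[\mathbb{I}^2_\gamma]$ is exactly right, and part (c) is indeed immediate from the absoluteness of the square-sequence clauses once (a) fixes $\kappa = (\mu^+)^{V[G]}$.
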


Now we are ready to complete the proof of Theorem \ref{main_theorem}. 

\begin{lem}
	Suppose that $X \in V[G]$ and $X \subseteq \kappa$. Then 
	$X \in V[G_{\mathbb{I}^2_{\gamma} \ast \dot{\mathbb{I}}^1_\gamma}]$
	for some $\gamma < \delta$. 
\end{lem}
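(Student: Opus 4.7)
The plan is to use a standard nice-name argument exploiting the $\kappa$-chain condition of the full iteration, together with the small-support structure, to show that any $X \subseteq \kappa$ in $V[G]$ is already captured at a bounded stage of the iteration.

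First, I would invoke Lemma~\ref{second_chain_condition_lem} to conclude that $\mathbb{I}^2_\delta \ast \dot{\mathbb{I}}^1_\delta$ has the $\kappa$-c.c. Given $X \in V[G]$ with $X \subseteq \kappa$, fix a nice $\mathbb{I}^2_\delta \ast \dot{\mathbb{I}}^1_\delta$-name
\[
\dot{X} = \bigcup_{\alpha < \kappa}\{\check{\alpha}\}\times A_\alpha,
\]
where each $A_\alpha$ is an antichain in $\mathbb{I}^2_\delta \ast \dot{\mathbb{I}}^1_\delta$. By the $\kappa$-c.c.\ each $A_\alpha$ has cardinality $<\kappa$, so the set $A = \bigcup_{\alpha<\kappa} A_\alpha$ of all conditions appearing in $\dot{X}$ has cardinality at most $\kappa$.

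Next I would bound the supports. Every condition $(p,\dot{q}) \in \mathbb{I}^2_\delta \ast \dot{\mathbb{I}}^1_\delta$ has support of size $<\mu$ in each coordinate of the two-step iteration: for $p$ this is immediate from the $<\mu$-support in the definition of $\vec{\mathbb{I}}^2$, and for $\dot{q}$ one works with a representative whose support is a (name for a) bounded subset of $\delta$ of size $<\mu$, as is standard for iterations of $\mu$-c.c.\ posets of the form considered. Taking the union of supports of all conditions in $A$ yields a subset of $\delta$ of cardinality at most $\kappa\cdot\mu = \kappa$. Since $\delta$ is regular and $\delta \geq \kappa^+$, this union is bounded; let $\gamma<\delta$ be an upper bound.

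Then $\dot X$ may be construed as an $\mathbb{I}^2_\gamma \ast \dot{\mathbb{I}}^1_\gamma$-name via the canonical factorization $\mathbb{I}^2_\delta\ast\dot{\mathbb{I}}^1_\delta \cong (\mathbb{I}^2_\gamma \ast \dot{\mathbb{I}}^1_\gamma)\ast\dot{\mathbb{R}}$, where $\dot{\mathbb{R}}$ denotes the tail iteration, since every condition in $A$ is equivalent to one living in the initial factor. It follows that $X = \dot{X}[G] \in V[G_{\mathbb{I}^2_\gamma \ast \dot{\mathbb{I}}^1_\gamma}]$, as required.

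The only mildly delicate point is the factorization/reflection of names down to the initial segment $\mathbb{I}^2_\gamma\ast\dot{\mathbb{I}}^1_\gamma$; this is routine given that the iteration has been set up as a $<\mu$-support (for $\vec{\mathbb{I}}^2$) iteration of $\mu$-c.c.\ posets (for $\vec{\mathbb{I}}^1$, by Lemma~\ref{chain_condition_of_I1_lem}), so I expect no obstacle beyond bookkeeping.
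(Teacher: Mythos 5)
Your proof is correct and takes the same approach the paper intends: the paper's proof is simply ``Immediate from Lemma~\ref{second_chain_condition_lem},'' and your nice-name argument (bounding the antichains by the $\kappa$-c.c., then bounding the union of their supports below $\delta$ using $\cf\delta = \delta > \kappa$) is precisely the standard unpacking of that one-liner.
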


\begin{proof}
	Immediate from Lemma \ref{second_chain_condition_lem}. 
\end{proof}

\begin{lem}
	The poset $\mathbb{I}$ forces $\mathsf{SATP}(\kappa)$. 
\end{lem}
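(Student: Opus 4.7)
The plan is to take an arbitrary $\kappa$-Aronszajn tree $T$ in $V[G]$ and locate a stage of the iteration at which the modified Baumgartner forcing specializes it. Fix a bijection between $\kappa \times \mu$ and $\kappa$ and identify $T$ with a subset of $\kappa \times \mu$. By the previous lemma there is $\gamma_0 < \delta$ such that $T \in V[G \restriction (\mathbb{I}^2_{\gamma_0} \ast \dot{\mathbb{I}}^1_{\gamma_0})]$; fix an $\mathbb{I}^2_{\gamma_0} \ast \dot{\mathbb{I}}^1_{\gamma_0}$-name $\dot{T}$ for $T$. Since any cofinal branch through $T$ in an intermediate model of the iteration would persist to $V[G]$ (where $T$ has none), below a suitable condition of our generic $\dot{T}$ is forced, at every stage $\gamma' \geq \gamma_0$, to be a $\kappa$-Aronszajn tree when viewed as an $\mathbb{I}^2_{\gamma'} \ast \dot{\mathbb{I}}^1_{\gamma'}$-name.

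By the bookkeeping used to define $\vec{\mathbb{I}}^2$ there is some $\gamma'$ with $\gamma_0 \leq \gamma' < \delta$ such that $\dot{T}_{\gamma'}$ is (equivalent to) $\dot{T}$, so that $\dot{\mathbb{J}}^2_{\gamma'} = \mathbb{B}_{\mathbb{I}^1_{\gamma'}}(\dot{T})$. By the second clause of the lemma on $\mathbb{B}_{\mathbb{I}^1}(\dot{T})$ stated in Section 3, the generic $F$ added by $\dot{\mathbb{J}}^2_{\gamma'}$ over $V[G^2_{\gamma'}]$ is a function $F \colon \kappa \times \mu \to \mu$ with the property that for \emph{any} $\mathbb{I}^1_{\gamma'}$-generic $H$ over $V[G^2_{\gamma'}]$, $F$ specializes $\dot{T}[G^2_{\gamma'}][H]$. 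In particular $F \in V[G^2_{\gamma'+1}] \subseteq V[G]$.

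To finish, I factor the generic in $V[G]$: since $\mathbb{I}^1_{\gamma'}$ is a regular subposet of $\mathbb{I}^1_\delta$, the restriction $H := G^1_\delta \cap \mathbb{I}^1_{\gamma'}$ is $\mathbb{I}^1_{\gamma'}$-generic over $V[G^2_\delta]$, and hence over the smaller model $V[G^2_{\gamma'}]$. By construction $\dot{T}[G^2_{\gamma'}][H] = T$, so $F$ is a specializing function for $T$; being a specializing function is absolute, so $T$ is special in $V[G]$ as witnessed by $F$.

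The principal obstacle is the anticipation inherent in the iteration: the function $F$ is added at stage $\gamma' + 1$, after which much further forcing occurs in both $\vec{\mathbb{I}}^2$ and $\vec{\mathbb{I}}^1$, yet $F$ must continue to specialize $T$ in the final model. This is precisely what $\mathbb{B}_{\mathbb{I}^1_{\gamma'}}(\dot{T})$ is built to achieve --- two nodes are given the same color only when $\mathbb{I}^1_{\gamma'}$ forces them incomparable in $\dot{T}$ over $V[G^2_{\gamma'}]$, so the specialization survives any choice of $\mathbb{I}^1_{\gamma'}$-generic extending $V[G^2_{\gamma'}]$.
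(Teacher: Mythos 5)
Your argument is correct and follows the same route as the paper's (much terser) proof: locate a stage $\gamma'$ at which $\dot{T}$ is the anticipated name $\dot{T}_{\gamma'}$, invoke the defining property of $\mathbb{B}_{\mathbb{I}^1_{\gamma'}}(\dot{T})$ to extract a specializing function $F$ robust to any $\mathbb{I}^1_{\gamma'}$-generic, and observe that $F$ persists to $V[G]$. The paper compresses the factoring and the $H$-robustness into the single phrase ``by construction of the forcing poset,'' and justifies persistence by noting that $V[G_{\mathbb{I}_{\gamma+1}}]$ and $V[G_{\mathbb{I}}]$ have the same cardinals (so ``special'' retains its meaning); your write-up fleshes out exactly these two points, which is a reasonable expansion.
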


\begin{proof}
	Suppose that $T$ is an $\kappa$-Aronszajn tree in 
	$V[G_{\mathbb{I}}]$ and let $\dot{T}$ be a 
	canonical name for it. Then for some 
	$\gamma < \delta$ it is an $\mathbb{I}_\gamma$-name 
	and $\dot{T} = \dot{T}_{\gamma}$. By construction of the 
	forcing poset, $\forces_{\mathbb{I}_{\gamma + 1}} ``\dot{T}$ is special''
	and since $V[G_{\mathbb{I}_{\gamma + 1}}]$ and $V[G_{\mathbb{I}}]$ have the same 
	cardinals $T$ remains special in the latter generic extension. 
\end{proof}

This concludes the proof of Theorem \ref{main_theorem}.

\begin{cor}
	Suppose that there is a weakly compact cardinal. Then in some generic 
	extension of $V$ the following holds: 
	\begin{align*}
		\square_{\omega_1, 2} + \mathsf{SATP}(\aleph_2) + \mathsf{SATP}(\aleph_1)
	\end{align*}
\end{cor}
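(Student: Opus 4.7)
The plan is to apply Theorem \ref{main_theorem} with $\mu = \aleph_1$, $\kappa$ weakly compact in $V$ (assuming also GCH, which can be arranged by a preliminary preparation), and $\delta = \kappa^+$. One has to exhibit simultaneously the two iterations $\vec{\mathbb{I}}^2$ and $\dot{\vec{\mathbb{I}}}^1$ and check that they satisfy the hypotheses of the theorem, after which the conclusion gives $\kappa = \aleph_2$, $\square_{\omega_1, 2}$, and $\mathsf{SATP}(\aleph_2)$ in $V[G]$; the remaining task is to arrange $\vec{\mathbb{I}}^1$ so that forcing with it over $V^{\vec{\mathbb{I}}^2}$ additionally forces $\mathsf{SATP}(\aleph_1)$ without disturbing the other two properties.

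I would take $\vec{\mathbb{I}}^1$ to be (in $V^{\vec{\mathbb{I}}^2}$) a finite-support, ccc iteration of length $\delta$ of the classical Baumgartner poset specializing $\aleph_1$-Aronszajn trees, with a bookkeeping function running through all $\mathbb{I}^2_\delta$-names for $\aleph_1$-Aronszajn trees (this is possible because $\mathbb{I}^2_\delta \ast \dot{\mathbb{I}}^1_\delta$ is $\kappa$-c.c.\ and therefore any such name reduces to a name in some $\mathbb{I}^2_\gamma \ast \dot{\mathbb{I}}^1_\gamma$ for $\gamma < \delta$). Correspondingly, $\vec{\mathbb{I}}^2$ is the $< \aleph_1$-support iteration starting with $\mathbb{P}(\aleph_1, < \kappa)$ and then, at each stage, forcing with $\mathbb{B}_{\mathbb{I}^1_\gamma}(\dot{T}_\gamma)$ for a $\dot{T}_\gamma$ selected by a bookkeeping device enumerating $\mathbb{I}^2_\gamma \ast \dot{\mathbb{I}}^1_\gamma$-names for $\kappa$-Aronszajn trees. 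Since each Baumgartner specializing poset is ccc, any $\dot{\mathbb{I}}^1_\gamma$ is forced to be $\aleph_1$-c.c., giving clause (1) of Definition \ref{suitability_defn}.

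The main obstacle is verifying clause (2) of suitability, namely that the quotient $(\mathbb{I}^2_{\bar\gamma} \cap M) \ast (\dot{\mathbb{I}}^1_{\bar\gamma} \cap M)/G$ adds no branch to the reflected $\alpha$-Aronszajn tree. For the $M$ required by Theorem \ref{main_theorem}, I would use the weak compactness of $\kappa$ to find, for each $\gamma \leq \delta$, an elementary $M \prec H(\theta)$ of size $\kappa$ closed under $< \kappa$-sequences, coming from (the transitive collapse of) a $\Pi^1_1$-indescribability argument applied to a coding of the iteration. For a generic $\alpha$ of the normal filter $\mathcal{F}$ used in the proof of Lemma \ref{chain_condition_lem}, $\alpha$ is inaccessible, $M_\alpha \cap \kappa = \alpha$, and the reflected iterations up to $\alpha$ have $\alpha$-c.c. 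The strengthened version of Lemma \ref{chain_condition_lem} described in the Remark following it (applied at the level of $\alpha$) shows that the quotient of $\mathbb{I}^2_{\bar\gamma} \cap M$ by a generic for $\mathbb{I}^2_{\bar\gamma} \cap M_\alpha$ is $\alpha$-Knaster; since the anticipated one-step ccc Baumgartner factors in $M$ preserve Knasterness (being ccc of size $< \alpha$ in $M$), the full quotient is $\alpha$-Knaster, and Lemma \ref{doesnt_add_branch_lem} yields that no branch is added to the reflected $\alpha$-Aronszajn tree. This is exactly where care is needed: one must check that the reflected object $\dot{T}_{\bar\gamma} \cap M_\alpha$ really is forced to be an $\alpha$-Aronszajn tree, which follows from the $\Pi^1_1$-reflection (analogous to item (7) of the first claim in the proof of Lemma \ref{chain_condition_lem}) once enough of the iteration has been coded into $R$.

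With suitability in hand, Theorem \ref{main_theorem} gives $\square_{\omega_1, 2} + \mathsf{SATP}(\aleph_2)$ in $V[G_{\mathbb{I}^2_\delta \ast \dot{\mathbb{I}}^1_\delta}]$, and $\kappa = \aleph_2$. It remains to see that $\mathsf{SATP}(\aleph_1)$ also holds in this final extension: by the bookkeeping every $\aleph_1$-Aronszajn tree appearing anywhere along $\vec{\mathbb{I}}^2 \ast \dot{\vec{\mathbb{I}}}^1$ is handled at some stage of $\dot{\vec{\mathbb{I}}}^1$ by a standard Baumgartner specializing poset, which makes the tree special in $V^{\mathbb{I}^2_\delta \ast \dot{\mathbb{I}}^1_{\gamma+1}}$; since the remaining tail iteration is ccc and hence preserves $\aleph_1$ and the specializing function, the tree remains special in $V[G]$. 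Finally, any $\aleph_1$-Aronszajn tree in the final model is an $\mathbb{I}^2_\delta \ast \dot{\mathbb{I}}^1_\gamma$-name for some $\gamma < \delta$ by the $\kappa$-c.c.\ of the iteration, and was therefore specialized at some later stage, completing the proof.
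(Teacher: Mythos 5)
Your overall strategy matches the paper's: take $\vec{\mathbb{I}}^2$ to start with $\mathbb{P}(\aleph_1, < \kappa)$ followed by the anticipatory Baumgartner posets, take $\dot{\vec{\mathbb{I}}}^1$ to be a finite-support ccc iteration of classical Baumgartner specializing posets, verify suitability via Lemma~\ref{chain_condition_lem}, the Remark after it, and the ccc-ness from \cite{baumgartner_malitz_reinhardt}, and then apply Theorem~\ref{main_theorem}. Your explicit verification at the end that $\mathsf{SATP}(\aleph_1)$ holds in the final model (bookkeeping plus ccc tail preservation) is a useful step that the paper's one-line ``follows immediately'' elides, and the reduction of names via $\kappa$-c.c.\ is correctly observed.

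There is, however, a genuine gap in your verification of clause (2) of Definition~\ref{suitability_defn}. You decompose the quotient $\brac{(\mathbb{I}^2_{\bar\gamma} \cap M) \ast (\dot{\mathbb{I}}^1_{\bar\gamma} \cap M)}/G$ as a composition of ``the quotient of $\mathbb{I}^2_{\bar\gamma} \cap M$ by a generic for $\mathbb{I}^2_{\bar\gamma} \cap M_\alpha$'' followed by ccc Baumgartner factors, and you argue each part is $\alpha$-Knaster. But in clause (2) the filter $G$ is generic for $(\mathbb{I}^2_{\bar\gamma} \cap M) \ast (\dot{\mathbb{I}}^1_{\bar\gamma} \cap M_\alpha)$ --- note the \emph{full} $\mathbb{I}^2_{\bar\gamma} \cap M$, not $\mathbb{I}^2_{\bar\gamma} \cap M_\alpha$. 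So the entire $\mathbb{I}^2$-coordinate is already decided by $G$, and the quotient lives purely in the $\dot{\mathbb{I}}^1$-direction; there is no residual $\mathbb{I}^2$-quotient to which the Remark could be applied. Moreover, your appeal to ``the Remark applied at the level of $\alpha$'' would require an analogue of Lemma~\ref{chain_condition_lem} at $\alpha$, which in turn needs $\Pi^1_1$-indescribability of $\alpha$; but $\alpha$ ranges over a set in the filter $\mathcal{F}$ and need only be inaccessible. What actually discharges clause (2) in this setting is simpler than what you propose: the relevant quotient $(\dot{\mathbb{I}}^1_{\bar\gamma} \cap M)/(\dot{\mathbb{I}}^1_{\bar\gamma} \cap M_\alpha)$ is ccc (being a quotient of a ccc finite-support iteration by a regular ccc subiteration), and a ccc forcing over a model in which $2^{\aleph_0} < \alpha$ adds no cofinal branch to an $\alpha$-Aronszajn tree. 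So your misidentified decomposition is repaired by a weaker and cleaner argument, and the corollary goes through.
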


\begin{proof}
	Let $\kappa$ be weakly compact and let 
	\begin{align*}
		\vec{\mathbb{I}}^2 &= \langle \langle \mathbb{I}^2_{\gamma} \colon \gamma 
		\leq \delta \rangle, \langle \dot{\mathbb{J}}^2_{\gamma} \colon \gamma < \delta \rangle \rangle \\
		\dot{\vec{\mathbb{I}}}^1 &= 
		\langle \langle \dot{\mathbb{I}}^1_{\gamma} \colon \gamma \leq \delta \rangle, 
		\langle \dot{\mathbb{J}}^1_{\gamma} \colon \gamma < \delta \rangle \rangle 
	\end{align*}
	be two iterations such that $\vec{\mathbb{I}}^2$ collapses $\kappa$ to $\aleph_2$, 
	adds $\square_{\omega_1, 2}$, and specializes all $\kappa$-Aronszajn trees while 
	anticipating $\dot{\vec{\mathbb{I}}}^1$--where $\dot{\vec{\mathbb{I}}}^1$ is an 
	$\vec{\mathbb{I}}^2$-name for the Baumgartner forcing--i.e. a finite support iteration of 
	posets of finite approximations to specializing functions of trees chosen according to 
	some appropriate bookkeeping function. For each ordinal $\gamma \leq \delta$ let $M$ 
	be an arbitrary elementary substructure of 
	$H(\theta)$ ($\theta$ sufficiently large) of cardinality $\kappa$ such that 
	$V_\kappa \cup M^{< \kappa} \cup \curly{\gamma} \subseteq M$ and $M$ contains all relevant parameters, and 
	let $\phi \colon \kappa \rightarrow M$ be an arbitrary bijection. Then 
	$\vec{\mathbb{I}}^2$, $\dot{\vec{\mathbb{I}}}^1$ are suitable for $M$, 
	$\phi$, $\gamma$ by the remark after the proof of Lemma \ref{chain_condition_lem}
	together with \cite{baumgartner_malitz_reinhardt}. Therefore the result follows immediately from 
	Theorem \ref{main_theorem}. 
\end{proof}

\section{The Global Result}

Using the methods of \cite{golshani_hayut} we are able to obtain our 
result simulaneously at $\omega$ many successive cardinals. 

\begin{theorem}
	Let $\mu$ be an uncountable regular cardinal and assume that there are $\omega$ many supercompact 
	cardinals above $\mu$. Then there is a generic extension of $V$ in which 
	$\square_{\mu^{+n}, 2} + \mathsf{SATP}(\mu^{+n + 1})$ holds for all $n \geq 0$. 
	Furthermore, if $\mu = \aleph_1$, we may ensure that $\mathsf{SATP}(\aleph_1)$ holds 
	as well. 
	\label{countably_many_theorem}
\end{theorem}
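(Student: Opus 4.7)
The plan is to iterate the construction of Theorem \ref{main_theorem} $\omega$ times, one level for each successive cardinal, using the anticipatory framework so that the forcing at each level takes into account all subsequent levels. Let $\kappa_0 < \kappa_1 < \kappa_2 < \cdots$ enumerate the $\omega$ supercompact cardinals above $\mu$. First, I would apply Laver preparation to each $\kappa_n$ so that its supercompactness becomes indestructible under $<\kappa_n$-directed closed forcing. Set $\mu_0 = \mu$ and $\mu_{n+1} = \kappa_n$, so that in the final model $\mu_n = \mu^{+n}$ and $\kappa_n = \mu^{+n+1}$.

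Next I would define, for each $n$, a two-step iteration $\vec{\mathbb{I}}^2_n \ast \dot{\vec{\mathbb{I}}}^1_n$ of length $\delta_n$ as in Theorem \ref{main_theorem}, with parameters $\mu_n < \kappa_n$, where $\vec{\mathbb{I}}^2_n$ collapses $\kappa_n$ to $\mu_n^+$, adds $\square_{\mu_n, 2}$, and specializes all $\kappa_n$-Aronszajn trees, while $\dot{\vec{\mathbb{I}}}^1_n$ is the tail iteration $\vec{\mathbb{I}}^2_{n+1} \ast \vec{\mathbb{I}}^2_{n+2} \ast \cdots$ (together with a Baumgartner specialization at the bottom if $\mu = \aleph_1$). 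The full forcing is $\vec{\mathbb{I}}^2_0 \ast \vec{\mathbb{I}}^2_1 \ast \cdots$, and after applying Theorem \ref{main_theorem} at each level one gets $\square_{\mu_n, 2} + \mathsf{SATP}(\mu_n^+)$ in the final model. To invoke the theorem at level $n$ I would construct the witnessing $M \prec H(\theta)$ of size $\kappa_n$ with $V_{\kappa_n} \cup M^{<\kappa_n} \subseteq M$ by pulling back along a supercompactness embedding with critical point $\kappa_n$, since the Laver-prepared supercompactness of $\kappa_n$ survives all the preceding forcing. Suitability then reduces to two checks: the $\mu_n$-c.c.\ of the subsequent forcing (which holds because the forcing at level $n+k$ is $<\kappa_n$-strategically closed and hence in particular does not even add $\kappa_n$-sequences), and branch preservation, which follows from the remark after Lemma \ref{chain_condition_lem} applied at the appropriate level together with Lemma \ref{doesnt_add_branch_lem}.

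The main obstacle is handling the apparent circularity in the definition of the forcing: the iteration at level $n$ anticipates the iterations at levels $> n$, but those are themselves defined inside the extension by the iteration at level $n$. The resolution, following \cite{golshani_hayut}, is a simultaneous recursion in which each $\vec{\mathbb{I}}^2_m$ is given by a canonical name depending only on a bookkeeping function enumerating potential tree names at level $m$, together with the verification that this name is unambiguous by induction on $m$. A secondary technical point is the treatment of the $\mu = \aleph_1$ case: at the bottom level one needs $\mathsf{SATP}(\aleph_1)$ and this is obtained by prefixing the entire construction with a standard Baumgartner iteration that specializes all $\aleph_1$-Aronszajn trees, checking that the specialization of each such tree is preserved by the subsequent $<\omega_1$-strategically-closed or c.c.c.\ forcings used at higher levels. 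Once these bookkeeping and coherence issues are dispatched, the verification of $\square_{\mu_n, 2}$ and $\mathsf{SATP}(\mu_n^+)$ at every level is a direct quotation of Theorem \ref{main_theorem}.
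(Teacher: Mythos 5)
Your proposal takes a genuinely different route from the paper, and unfortunately the route has a fatal gap in the anticipation step.

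The paper does \emph{not} stack $\omega$ separate copies of Theorem~\ref{main_theorem}, one per cardinal. Instead it builds a \emph{single} iteration $\vec{\mathbb{I}}$ of length $\delta = (\sup_n\kappa_n)^{++}$ whose first step is the \emph{full-support product} $\prod_{n<\omega}\mathbb{P}(\kappa_{n-1},<\kappa_n)$, and whose subsequent steps are Baumgartner-type specializers, a bookkeeping function $h$ assigning each step $\gamma$ a level $n=h(\gamma)$. The crucial technical device is the factorization
\[\mathbb{I}_\gamma\simeq\mathbb{I}_\gamma(>\kappa_n)\ast\dot{\mathbb{I}}_\gamma(\kappa_n)\ast\dot{\mathbb{I}}_\gamma(<\kappa_n),\]
with the higher-level part $\mathbb{I}_\gamma(>\kappa_n)$ isolated as an outer $<\kappa_n$-strategically closed factor, and the level-$n$ specializer anticipating only the \emph{lower}-level factor $\dot{\mathbb{I}}_\gamma(<\kappa_n)$, which by clause~(d) of the construction really is $\kappa_{n-1}$-c.c.\ and $<\mu$-closed. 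Thus suitability at level $n$ is verified with $\mu_n=\kappa_{n-1}$ and the anticipated poset is $\kappa_{n-1}$-c.c.\ as the framework demands.

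Your construction runs the iterations bottom-up and anticipates the \emph{tail} $\vec{\mathbb{I}}^2_{n+1}\ast\vec{\mathbb{I}}^2_{n+2}\ast\cdots$. That tail starts with $\mathbb{P}(\kappa_n,<\kappa_{n+1})$, which collapses $\kappa_{n+1}$ to $\kappa_n^+$ and has antichains of size up to $\kappa_{n+1}$. It is therefore not $\mu_n=\kappa_{n-1}$-c.c., and the definition of $\mathbb{B}_{\mu_n,\mathbb{I}^1}(\dot T)$ and of suitability (Definition~\ref{suitability_defn}) are simply not applicable. Your assertion that ``the forcing at level $n+k$ is $<\kappa_n$-strategically closed and hence in particular does not even add $\kappa_n$-sequences'' and that this gives the required $\mu_n$-c.c.\ conflates two orthogonal properties: $\Col(\kappa_n,\kappa_{n+1})$ is $<\kappa_n$-closed yet nowhere near $\kappa_{n-1}$-c.c. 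Nor can you reverse the order and run top-down, since the tower is infinite and has no top to start from; the only way to put the higher-level collapses ``before'' the level-$n$ specializer is the virtual factorization above, and for that to be available you need the first step to be a product rather than a sequential composite. The same confusion undermines your handling of $\mu=\aleph_1$: prefixing a Baumgartner iteration does not help, because the subsequent strategically closed collapses will add new $\aleph_1$-Aronszajn trees; the paper instead interleaves the $\aleph_1$-specializers into the single iteration via the value $h(\gamma)=-1$.
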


\begin{proof}
	Let $\langle \kappa_n \colon n < \omega \rangle$ 
	be an increasing sequence of indestructibly supercompact cardinals above $\mu$ and 
	let $\delta = ( \sup_{n < \omega}{\kappa_n})^{++}$. 
	In the following it will be convenient to write $\kappa_{-1} = \mu$. 
	If $\mu = \aleph_1$, let $R = \omega \cup \curly{-1}$, and otherwise 
	let $R = \omega$. 
	Let $h \colon \delta \setminus \curly{0} \rightarrow R$ be a function 
	such that for all $n \in R$, $h(\alpha) = n$ for unboundedly 
	many $\alpha < \delta$. We define an iteration 
	\begin{align*}
		\vec{\mathbb{I}} = 
		\langle \langle \mathbb{I}_{\gamma} \colon 
		\gamma \leq \delta \rangle, 
		\langle 
		\dot{\mathbb{J}}_{\gamma} \colon \gamma < \delta 
		\rangle \rangle 
	\end{align*}
	as well as auxilliary forcings 
	$\mathbb{I}_{\gamma}(> \kappa_n)$, $\dot{\mathbb{I}}_{\gamma}
	(\kappa_n)$, and $\dot{\mathbb{I}}_\gamma(< \kappa_n)$
	for $n \in R$, $\gamma \leq \delta$ such that: 
	\begin{enumerate}[(a)]
		\item $\mathbb{I}_{\gamma} \simeq 
			\mathbb{I}_{\gamma}(> \kappa_n) \ast 
			\dot{\mathbb{I}}_\gamma(\kappa_n) 
			\ast \dot{\mathbb{I}}_\gamma( < \kappa_n)$ 
		\item $\mathbb{I}_{\gamma}( > \kappa_n)$ is 
			$< \kappa_n$-strategically closed. 
		\item For all $n \geq 0$, $\forces_{\mathbb{I}_\gamma(> \kappa_n)} 
			\dot{\mathbb{I}}_\gamma(\kappa_n)$ 
			is $\kappa_n$-c.c. and $< \kappa_{n - 1}$-
			strategically closed, and for 
			$n = - 1$, $\forces_{\mathbb{I}_\gamma(> \kappa_n)} \dot{\mathbb{I}}_{\gamma}(\kappa_n)$ 
			is c.c.c. 
		\item For all $n \geq 0$, $\forces_{\mathbb{I}_\gamma(> \kappa_n) 
			\ast \dot{\mathbb{I}}_\gamma(\kappa_n) }
			\dot{\mathbb{I}}_\gamma( < \kappa_n)$
			is $\kappa_{n - 1}$-c.c. and 
			$< \mu$-closed. 
	\end{enumerate}
	Set $\mathbb{I}_1 = \prod_{n < \omega}{
		\mathbb{P}(\kappa_{n - 1}, < \kappa_n)}$, 
		where the product is taken with full support, and
		then let 
\begin{itemize}
	\item $\mathbb{I}_1(< \kappa_n) = 
		\prod_{m < n}{\mathbb{P}(\kappa_{m - 1}, < \kappa_m)}$ for $n \geq 0$ and 
		is the trivial forcing for $n = -1$. 
	\item $\mathbb{I}_1(\kappa_n) = \mathbb{P}(\kappa_{n - 1}, 
		< \kappa_n)$ for $n \geq 0$ and is the trivial forcing for $n = -1$. 
	\item $\mathbb{I}_1(> \kappa_n) = \prod_{m > n}{
			\mathbb{P}(\kappa_{m - 1}, < \kappa_m)}$, 
			also taken with full support. 
\end{itemize}

Now suppose that $2 \leq \gamma \leq \delta$ and we have already 
defined $\mathbb{I}_{\gamma'}$, $\mathbb{I}_{\gamma'}(> \kappa_n)$, 
$\dot{\mathbb{I}}_{\gamma'}(\kappa_n)$, and 
$\dot{\mathbb{I}}_{\gamma'}(< \kappa_n)$ for all $\gamma' < \gamma$
and $n \in R$. We define $\mathbb{I}_{\gamma}$, 
$\dot{\mathbb{I}}_\gamma(> \kappa_n)$, and 
$\dot{\mathbb{I}}_\gamma(< \kappa_n)$ as follows: 

\begin{itemize}
	\item If $\gamma$ is a limit ordinal then 
		$\mathbb{I}_\gamma$ is the set of all 
		$p$ with domain $\gamma$ such that 
		$p \restriction \gamma' \in \mathbb{I}_{\gamma'}$ 
		for all $\gamma' < \gamma$,  for all 
		$n \geq 0$ we have $| \supp{(p)} \cap h^{-1}(n) | < 
		\kappa_{n - 1}$, and for $n = -1$ we have $| \supp{(p)} \cap h^{-1}(n) |$ finite. 
	\item If $\gamma = \bar{\gamma} + 1$ is a successor ordinal 
		and $n = h(\bar{\gamma})$ then let 
		$\dot{T}_{\bar{\gamma}}$ be an 
		$\mathbb{I}_{\bar{\gamma}}$-name for a 
		$\kappa_n$-Aronszajn tree 
		chosen according to 
		some bookkeeping function, and let
		\begin{align*}
			\mathbb{I}_{\gamma} = 
			\mathbb{I}_{\bar{\gamma}} \ast 
			\dot{\mathbb{B}}_{\mathbb{I}_{\bar{\gamma}}
		(< \kappa_n)}(\dot{T}_{\bar{\gamma}})
		\end{align*}
		Observe that $\dot{\mathbb{B}}_{\mathbb{I}_{\bar{\gamma}}(< \kappa_n)}(\dot{T}_{\bar{\gamma}})$
		is an $\mathbb{I}_{\bar{\gamma}}(> \kappa_{n - 1})$-name (if $n = - 1$ we mean here that it 
		is an $\mathbb{I}_{\bar{\gamma}}$-name) and so may be viewed as 
		an $\mathbb{I}_{\bar{\gamma}}(\kappa_n)$-name in the extension by 
		$\mathbb{I}_{\bar{\gamma}}(> \kappa_n)$. 
\end{itemize}

For $n \in R$ let 
\begin{align*}
	\mathbb{I}_{\gamma}(> \kappa_n) = 
	\left\{  p \in \mathbb{I}_\gamma \colon p(0) \in \mathbb{I}_1(> \kappa_n) \land 
	\supp{(p)} \setminus \curly{0} \subseteq \bigcup_{m > n}{h^{-1}(m)}\right\}
\end{align*}
Then $\mathbb{I}_\gamma(> \kappa_n)$ is a regular subforcing of 
$\mathbb{I}_\gamma$. We let $\dot{\mathbb{I}}_\gamma(\kappa_n)$ be an 
$\mathbb{I}_\gamma(> \kappa_n)$-name for the poset 
\begin{align*}
	\mathbb{I}_{\gamma}(\kappa_n) = 
	\left\{ p \in \mathbb{I}_\gamma \colon p(0) \in \mathbb{I}_1(\kappa_n) \land 
	\supp{(p)} \setminus \curly{0} \subseteq h^{-1}(n) \right\}
\end{align*}
and let $\dot{\mathbb{I}}_\gamma(< \kappa_n)$ be an $\mathbb{I}_\gamma(> \kappa_n) 
\ast \dot{\mathbb{I}}_\gamma(\kappa_n)$-name for the poset 
\begin{align*}
	\mathbb{I}_\gamma(< \kappa_n) = 
	\left\{ p \in \mathbb{I}_\gamma \colon p(0) \in \mathbb{I}_1(< \kappa_n) \land
	\supp{(p)} \setminus \curly{0} \subseteq \bigcup_{m < n}{h^{-1}(m)} \right\} 
\end{align*}

Observe that $\mathbb{I}_\gamma \simeq \mathbb{I}_{\gamma}(> \kappa_n) \ast 
\dot{\mathbb{I}}_\gamma(\kappa_n) \ast \dot{\mathbb{I}}_\gamma(< \kappa_n)$. 

\begin{lem}
Let $G_{> \kappa_n}$ be generic for $\mathbb{I}_{\gamma}(> \kappa_n)$ and 
	$\dot{G}_{\kappa_n}$ be an $\mathbb{I}_\gamma(> \kappa_n)$-name for the 
	generic for $\dot{\mathbb{I}}_{\gamma}(\kappa_n)$. If 
	$n \geq 0$, then $V[G_{> \kappa_n}] \models$ ``$\mathbb{I}_{\gamma}(\kappa_n)$ is 
	$< \kappa_{n - 1}$-strategically closed'' and $V[G_{> \kappa_n} \ast 
	\dot{G}_{\kappa_n}] \models$ ``$\dot{\mathbb{I}}_\gamma(< \kappa_n)$ is 
	$< \mu$-strategically closed.''
	\label{countably_many_closure_lem}
\end{lem}

\begin{proof}
	Clearly 
	\begin{align*}
	V\brac{G_{> \kappa_n}} \models 
	\text{$\mathbb{I}_\gamma(\kappa_n)$ is $< \kappa_{n - 1}$-strategically closed}
\end{align*}
since $\mathbb{I}_{\gamma}(\kappa_n)$ may be defined as an iteration with $< \kappa_{n - 1}$-support 
in $V[G_{> \kappa_n}]$ where each iterand has the requisite closure. The fact that 
\begin{align*}
	V[G_{> \kappa_n} \ast \dot{G}_{\kappa_n}] \models \dot{\mathbb{I}}_{\gamma}(< \kappa_n) \, 
	\text{ is $< \mu$-strategically closed. }
\end{align*}
may be argued similarly. 
\end{proof}

\begin{lem}
	Let $G_{> \kappa_n}$, $\dot{G}_{\kappa_n}$ be as in the statement of Lemma \ref{countably_many_closure_lem}. 
	Then $V\brac{G_{> \kappa_n}} \models$ ``$\mathbb{I}_\gamma(\kappa_n)$ is 
	$\kappa_n$-Knaster'' and $V[G_{> \kappa_n} \ast \dot{G}_{\kappa_n}] \models$ ``$\dot{\mathbb{I}}_\gamma(< 
	\kappa_n)$ is $\kappa_{n - 1}$-Knaster.'' Moreover, we actually have 
	$V[G_{> \kappa_n}] \models$ ``$\mathbb{I}_\gamma(\kappa_n)/ L$ is $\kappa_n$-Knaster,''
	for any $L$ which is generic for a regular subiteration of $\mathbb{I}_\gamma(\kappa_n)$. 
	\label{countably_many_chain_condition_lem}
\end{lem}

\begin{proof}
We prove these simultaneously using induction on $n$. 
For each $m > n$ let $G_{\kappa_m}$ denote the generic for 
$\mathbb{I}_\gamma(\kappa_m)$ and let $\dot{\mathbb{T}}(\kappa_m) = \check{\mathbb{T}}_{\bigcup \dot{G}_{\kappa_m}}$
be the $\mathbb{I}_\gamma(\kappa_m)$-name for the poset which threads $\bigcup G_{\kappa_m}$ with conditions of 
size $< \kappa_{m - 1}$. An argument similar to that given in Lemma \ref{square_star_threading_lem} tells us that 
$\mathbb{I}_\gamma(\kappa_m) \ast \dot{\mathbb{T}}(\kappa_m)$ is $< \kappa_{m - 1}$-closed. Moreover, it
is clear that this poset forces $\lvert \kappa_m \rvert = \kappa_{m - 1}$ and has size $\leq \delta$. 
Let $\dot{\mathbb{T}}(> \kappa_n)$ be the $\mathbb{I}_\gamma(> \kappa_n)$-name for 
\begin{align*}
	\mathbb{T}(> \kappa_n) = \prod_{m > n}{ \mathbb{T}(\kappa_m)}
\end{align*}
where $\mathbb{T}(\kappa_m) = \dot{\mathbb{T}}(\kappa_m)\brac{G_{\kappa_m}}$ and the product is taken with 
full support. Then $\mathbb{I}_{\gamma}(> \kappa_n) \ast \dot{\mathbb{T}}(> \kappa_n)$ is 
$< \kappa_n$-closed, forces $\lvert \kappa_m \rvert = \kappa_n$ for all $m > n$, and has 
cardinality $\leq \delta$, and so there is a regular embedding from 
$\mathbb{I}_\gamma(> \kappa_n) \ast \dot{\mathbb{T}}(> \kappa_n)$ into 
$\Col{(\kappa_n, \delta)}$--in fact we have $\Col{(\kappa_n, \delta)} \simeq (\mathbb{I}_\gamma(> \kappa_n) 
\ast \dot{\mathbb{T}}(> \kappa_n)) \times \Col{(\kappa_n, \delta)}$. 
Let $(G_{> \kappa_n} \ast \dot{H}_{> \kappa_n}) \times K_{n}$
be generic for the latter poset. 

Then to show that $\mathbb{I}_\gamma(\kappa_n)$
is $\kappa_n$-Knaster in $V[G_{> \kappa_n}]$ it suffices to show that it satisfies this property in 
$V[(G_{> \kappa_n} \ast \dot{H}_{> \kappa_n}) \times K_n]$. But $(\mathbb{I}_\gamma(> \kappa_n) \ast 
\dot{\mathbb{T}}(> \kappa_n)) \times \Col{(\kappa_n, \delta)} \simeq \Col{(\kappa_n, \delta)}$ is 
$< \kappa_n$-directed closed and therefore $\kappa_n$ is supercompact (and in particular weakly compact) 
in $V[(G_{> \kappa_n} \ast \dot{H}_{> \kappa_n}) \times K_n]$. Thus $\mathbb{I}_\gamma(\kappa_n)$
is $\kappa_n$-Knaster in this generic extension by Lemma \ref{chain_condition_lem} from the proof 
of Theorem \ref{main_theorem}.
More precisely, we apply Lemma \ref{chain_condition_lem} to the pair 
$\vec{\mathbb{I}}_\gamma(\kappa_n)$, $\dot{\vec{\mathbb{I}}}_\gamma(< \kappa_n)$. 
Note that in order to do so we must have that this pair is \emph{suitable} (with regard to sufficiently closed 
elementary substructures of $H(\theta)$) in the sense of Definition \ref{suitability_defn}. 
But part (1) of this definition follows from the inductive hypothesis of the current lemma and 
part (2) follows from Lemma \ref{doesnt_add_branch_lem} in conjunction with the inductive hypothesis.

For the ``moreover'' part of the lemma, use the remark after the proof of Lemma \ref{chain_condition_lem}
rather than Lemma \ref{chain_condition_lem} itself. 

Finally, for the second part of the lemma, recall that by the inductive hypothesis we have
\begin{align*}
	V[G_{> \kappa_{n - 1}} \ast \dot{G}_{\kappa_{n - 1}}] \models \text{``$\dot{\mathbb{I}}_{\gamma}(< 
	\kappa_{n - 1})$ is $\kappa_{n - 2}$-Knaster'' }
\end{align*}
Since $G_{> \kappa_{n - 1}} = G_{> \kappa_n} \ast \, \dot{G}_{\kappa_n}$, 
$\mathbb{I}_{\gamma}(< \kappa_n) \simeq \mathbb{I}_\gamma(\kappa_{n - 1}) \, \ast \, \dot{
	\mathbb{I}}_{\gamma}(< \kappa_{n - 1})$, and 
	$V[G_{> \kappa_{n - 1}}] \models$ ``$\dot{\mathbb{I}}_\gamma(\kappa_{n - 1})$ is $\kappa_{n-1}$-Knaster''
	by the inductive hypothesis, we have 
	\begin{align*}
		V[G_{> \kappa_n} \ast \dot{G}_{\kappa_n}] = 
		V[G_{> \kappa_{n - 1}}] \models \text{$\dot{\mathbb{I}}_\gamma(< \kappa_n)$ is $\kappa_{n - 1}$-Knaster }
	\end{align*}
	as desired. 
\end{proof}

Now Theorem \ref{countably_many_theorem} follows immediately from 
Lemmas \ref{countably_many_closure_lem} and \ref{countably_many_chain_condition_lem}.

\end{proof}

Finally we may use an Easton-support iteration of the $\omega$-block posets given by Theorem \ref{countably_many_theorem} 
exactly as in \cite{golshani_hayut} to obtain a global result: 

\begin{theorem}
	Assume that there are class many supercompact cardinals with no inaccessible limit. 
	Then there is a class forcing extension of $V$ in which $\square_{\kappa, 2} + 
	\mathsf{SATP}(\kappa^+)$ holds for all regular $\kappa$. 
	\label{}
\end{theorem}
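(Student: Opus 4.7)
The plan is to carry out an Easton-support class iteration of the $\omega$-block forcings from Theorem \ref{countably_many_theorem}, following the blueprint of \cite{golshani_hayut}. The ``no inaccessible limit'' hypothesis is what makes this setup work cleanly: it lets us partition the class of supercompacts into $\omega$-blocks separated by accessible regular cardinals that can serve as the ``base'' $\mu$ in each application of Theorem \ref{countably_many_theorem}.

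First I would enumerate the supercompacts in increasing order and group them into consecutive $\omega$-blocks $\vec{\kappa}^i = \langle \kappa^i_n \colon n < \omega \rangle$ (for $i$ ranging over the ordinals), and set $\mu_i$ to be the successor of the supremum of $\vec{\kappa}^{i-1}$ (with $\mu_0 = \aleph_1$ so that the corollary-style $\mathsf{SATP}(\aleph_1)$ clause of Theorem \ref{countably_many_theorem} is available at the bottom). The absence of inaccessible limits guarantees that each $\mu_i$ is a genuine regular non-inaccessible cardinal sitting strictly below $\kappa^i_0$. A preliminary Laver preparation makes each supercompact $\kappa^i_n$ indestructible under $< \kappa^i_n$-directed closed forcing.

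Next, define the class iteration $\vec{\mathbb{P}} = \langle \mathbb{P}_i \colon i \in \mathrm{Ord} \rangle$ with Easton support: at stage $i$, force with a $\mathbb{P}_{<i}$-name for the $\omega$-block poset $\mathbb{R}_i$ supplied by Theorem \ref{countably_many_theorem} applied to $\vec{\kappa}^i$ over base $\mu_i$. Direct limits are taken at Mahlo-style stages and inverse limits elsewhere, with supports bounded according to the block structure. The key local verifications are three: (1) $\mathbb{P}_{<i}$ has size less than $\kappa^i_0$, so by Laver indestructibility every $\kappa^i_n$ remains supercompact in $V^{\mathbb{P}_{<i}}$ and Theorem \ref{countably_many_theorem} applies there to give $\square_{\kappa^i_n, 2} + \mathsf{SATP}(\kappa^i_n{}^+)$ for every $n \geq 0$; (2) the tail $\mathbb{P}_{>i}$ is sufficiently directed-closed (essentially $< \kappa^{i+1}_0$-directed closed) that it adds no new bounded subsets of $\mu_{i+1}$, so all $\kappa^i_n$-Aronszajn trees, specializing functions, and $\square_{\kappa^i_n, 2}$-sequences from stage $i$ persist in the final model; (3) the standard Easton-support class-forcing machinery preserves ZFC.

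The hard part will be the bookkeeping required in step (2): one must verify that the combinatorial principles established at block $i$ genuinely survive forcing at all later blocks. For $\square_{\kappa^i_n, 2}$ this amounts to ruling out that the tail forcing adds a new limit ordinal below $\kappa^i_n{}^+$ at which no $\mathcal{C}_\alpha$ has been assigned, which follows from the directed-closure of the tail; for $\mathsf{SATP}(\kappa^i_n{}^+)$ one needs both that no new $\kappa^i_n{}^+$-Aronszajn trees are added (again by closure) and that the specializing function built at stage $i$, which lies in $V^{\mathbb{P}_{\le i}}$, still specializes the (unchanged) trees in the final extension. Once these preservation facts are packaged with the factorization $\mathbb{P}_{\mathrm{Ord}} \simeq \mathbb{P}_{\le i} \ast \dot{\mathbb{P}}_{>i}$ they give the result, which is then exactly the global theorem of \cite{golshani_hayut} with $\mathsf{SATP}$ strengthened to $\square_{\kappa, 2} + \mathsf{SATP}(\kappa^+)$ by virtue of our Theorem \ref{countably_many_theorem} replacing the analogous block theorem there.
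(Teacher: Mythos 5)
Your proposal follows exactly the route the paper takes, which is itself a one-sentence citation: carry out an Easton-support class iteration of the $\omega$-block posets supplied by Theorem \ref{countably_many_theorem}, following the global scheme of \cite{golshani_hayut}, using Levy-Solovay/Laver preparation to keep the block supercompacts usable at stage $i$ and tail-closure to preserve what has been accomplished at earlier blocks. The overall architecture and the identification of the three verification points (smallness of $\mathbb{P}_{<i}$, closure of the tail, ZFC preservation for the class iteration) are right.

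One technical claim in your write-up is wrong as stated, though the conclusion you draw from it survives. You assert that the tail $\mathbb{P}_{>i}$ is ``essentially $<\kappa^{i+1}_0$-directed closed.'' It cannot be: the first nontrivial factor of $\mathbb{R}_{i+1}$ is $\mathbb{P}(\mu_{i+1}, <\kappa^{i+1}_0)$, which collapses $\kappa^{i+1}_0$ to $\mu_{i+1}^+$ and so is certainly not $<\kappa^{i+1}_0$-closed; moreover even at the correct level $\mu_{i+1}$ it is only $<\mu_{i+1}$-\emph{strategically} closed, not directed closed (this is the whole point of Lemma~\ref{basic_properties_of_p_lem}). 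What the preservation step actually uses is that the tail is $<\mu_{i+1}$-strategically closed, hence $\mu_{i+1}$-distributive, hence adds no new bounded subsets of $\mu_{i+1}$; since $\mu_{i+1}$ sits above every $\kappa^i_n$, that is enough to keep the block-$i$ square sequences and specializing functions intact. Relatedly, $\mathbb{P}_{<i}$ is small rather than directed closed, so the preservation of (indestructible) supercompactness at stage $i$ is a Levy-Solovay-type small-forcing argument rather than an application of Laver indestructibility per se; the indestructibility is invoked inside the block, within the proof of Theorem~\ref{countably_many_theorem}, not to pass from $V$ to $V^{\mathbb{P}_{<i}}$.
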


\medskip 

\begin{thebibliography}{10}

\bibitem{shelah_stanley}
Saharon Shelah and Lee Stanley.
\newblock Weakly compact cardinals and nonspecial {A}ronszajn trees.
\newblock {\em Proc. Amer. Math. Soc.}, 104(3):887--897, 1988.

\bibitem{golshani_hayut}
Mohammad Golshani and Yair Hayut.
\newblock Special aronszajn tree property, 2016.

\bibitem{baumgartner_malitz_reinhardt}
J.~Baumgartner, J.~Malitz, and W.~Reinhardt.
\newblock Embedding trees in the rationals.
\newblock {\em Proc. Nat. Acad. Sci. U.S.A.}, 67:1748--1753, 1970.

\bibitem{laver_shelah}
Richard Laver and Saharon Shelah.
\newblock The {$\aleph_{2}$}-{S}ouslin hypothesis.
\newblock {\em Trans. Amer. Math. Soc.}, 264(2):411--417, 1981.

\bibitem{jensen}
R.~Bj\"{o}rn Jensen.
\newblock The fine structure of the constructible hierarchy.
\newblock {\em Ann. Math. Logic}, 4:229--308; erratum, ibid. 4 (1972), 443,
  1972.
\newblock With a section by Jack Silver.

\bibitem{combinatorial_principles_in_core_model}
Ernest Schimmerling.
\newblock Combinatorial principles in the core model for one {W}oodin cardinal.
\newblock {\em Ann. Pure Appl. Logic}, 74(2):153--201, 1995.

\bibitem{indexed_squares}
James Cummings and Ernest Schimmerling.
\newblock Indexed squares.
\newblock {\em Israel J. Math.}, 131:61--99, 2002.

\bibitem{cc_weak_square}
Hiroshi Sakai.
\newblock Chang's conjecture and weak square.
\newblock {\em Arch. Math. Logic}, 52(1-2):29--45, 2013.

\bibitem{cummings_foreman}
James Cummings and Matthew Foreman.
\newblock The tree property.
\newblock {\em Advances in Mathematics}, 133(1):1 -- 32, 1998.

\bibitem{kunen_tall}
Kenneth Kunen and Franklin Tall.
\newblock Between martin's axiom and souslin's hypothesis.
\newblock {\em Fundamenta Mathematicae}, 102(3):173--181, 1979.

\end{thebibliography}

\end{document}